\newtheorem{theorem}{Theorem}[section]
\newtheorem{corollary}{Corollary}[theorem]
\newtheorem{lemma}[theorem]{Lemma}
\providecommand{\keywords}[1]{{\textit{Keywords:}} #1}
\newcommand{\MM}{{\mathsf{M}}}
\newcommand{\LL}{{\mathsf{L}}}
\newcommand{\MMd}{{\mathsf{M}_\partial}}
\newcommand{\ff}{{\mathsf{f}}}
\newcommand{\Se}{{S_\epsilon}}
\newcommand{\NKT}{\mathcal{N}(K)^\bot}
\newcommand{\Npde}{V_{00}}
\newcommand{\KK}{{\mathsf{K_h}}}
\newcommand{\LKK}{{\mathsf{K}}}
\newcommand{\WW}{{\mathsf{W}}}
\newcommand{\PP}{{\mathsf{P}}}
\newcommand{\II}{{\mathsf{I}}}
\newcommand{\xx}{{\mathbf{x}}}
\newcommand{\yy}{{\mathbf{y}}}
\newcommand{\zz}{{\mathbf{z}}}
\newcommand{\bb}{{\mathbf{b}}}
\newcommand{\nn}{{\mathbf{n}}}
\newcommand{\NK}{\mathcal{N}(\LKK)}
\newcommand{\pphi}{{\mathbf{\phi}}}
\newcommand{\hphi}{{\hat{\phi}}}
\newcommand{\hhphi}{{\mathbf{\hphi}}}
\newcommand{\nullspace}[1]{{\mathcal{N}(#1)}}
\DeclareMathOperator*{\argmin}{arg\,min}
\DeclareMathOperator*{\argmax}{arg\,max}
\title{A regularization operator for source identification for elliptic PDEs} %for elliptic PDEs
\author{Ole L{\o}seth Elvetun\thanks{Faculty of Science and Technology, Norwegian University of Life Sciences, P.O. Box 5003, NO-1432 {\AA}s, Norway. Email: ole.elvetun@nmbu.no.} \, and Bj{\o}rn Fredrik Nielsen\thanks{Faculty of Science and Technology, Norwegian University of Life Sciences, P.O. Box 5003, NO-1432 {\AA}s, Norway. Email: bjorn.f.nielsen@nmbu.no. Nielsen's work was supported by The Research Council of Norway, project number 239070.} }
\begin{document}

\maketitle

\begin{abstract} 
We study a source identification problem for a prototypical elliptic PDE from Dirichlet boundary data. This problem is ill-posed, and the involved forward operator has a significant nullspace. 
Standard Tikhonov regularization yields solutions which approach the minimum $L^2$-norm least-squares solution as the regularization parameter tends to zero. We show that this approach 'always' suggests that the unknown local source is very close to  the boundary of the domain of the PDE, regardless of the position of the true local source. 

We propose an alternative regularization procedure, realized in terms of a novel  regularization operator, which is better suited for identifying local sources positioned anywhere in the domain of the PDE. Our approach is motivated by the classical theory for Tikhonov regularization and yields a standard quadratic optimization problem. Since the new methodology is derived for an abstract operator equation, it can be applied to many other source identification problems. 
This paper contains several numerical experiments and an analysis of the new methodology.
\end{abstract}
\keywords{Inverse source problems, PDE-constrained optimization, Tikhonov regularization}

\section{Introduction}
\label{section:introduction}
We will study the problem of identifying the source in a prototypical elliptic PDE from Dirichlet  boundary data: 
    \begin{equation} \label{eq1}
        \min_{(f,u) \in F_h \times H^1(\Omega)} \left\{ \frac{1}{2}\|u-d\|_{L^2(\partial\Omega)}^2 + \frac{1}{2}\alpha\|\WW f\|_{L^2(\Omega)}^2 \right\}
    \end{equation}
    subject to 
    \begin{equation} \label{eq2}
    \begin{split}
        -\Delta u + \epsilon u &= f \quad \mbox{in } \Omega, \\
        \frac{\partial u}{\partial \nn} &= 0  \quad \mbox{on } \partial \Omega, 
    \end{split}
    \end{equation}
    where $F_h$ is a finite dimensional subspace of $L^2(\Omega)$, $\WW: F_h \rightarrow F_h$ is a regularization operator, $\alpha > 0$ is a regularization parameter, \textcolor{black}{$d$ is boundary data}, $\epsilon$ is a positive parameter, $\nn$ denotes the outwards pointing unit normal vector of the boundary $\partial \Omega$ of the bounded domain $\Omega$, and $f$ is the unknown source. \textcolor{black}{(In other words, we attempt to use the Dirichlet boundary data $u=d$ on $\partial \Omega$, applying the formulation \eqref{eq1}-\eqref{eq2}, to identify $f$.)}
    %(The term $\epsilon u$ is added in order to avoid the mundane issues arising in connection with the free constant of the solution of the Neumann problem \eqref{eq2}.)  

This problem, and variants of it, appear in many applications. For example, in crack determination \cite{Alves04}, in EEG \cite{bail01, elul72} and in the inverse ECG problem  \cite{Nie13a,Wan13}. 

% In view of its practical importance, it is really  unfortunate that it is ill-posed: The associated forward operator does not have a bounded inverse, and the involved nullspace is significant. 
%  difficult to solve because it is ill-posed: the behavior of the solution is not only discontinuous with respect to input data, but the problem also fails to satisfy the uniqueness criteria of Hadamard, i.e. the forward operator has a non-trivial nullspace. 

%Regardless of the ill-posed nature of \eqref{eq1}-\eqref{eq2}, several suggestions have been made to obtain reliable results. 
% \textcolor{red}{Neste seksjon maa skrives om. Det ser ut til at Badia (Paris) og Rundell (Texas) har jobbet med disse problemene. Maa sjekke hva slags type randdata som brukes i de forskjellige artiklene. Kanskje man kan begynne "gravearbeidet" ved aa se på [1]. [3,10] og [1] ser ikke ut til aa vaere naturlige foerste referanser. I [10] ser det ut til at man bare bruker Dirichlet data.}

Even though most source identification tasks for elliptic PDEs are ill-posed, several methods for computing reliable results have been developed. Typically, one assumes a priori that $f$ is composed of a finite number of pointwise sources or sources having compact support within a small number of finite subdomains, see, e.g., \cite{Abdel15,babda09,Bad00,Han11,Zha19} and references therein. Such approaches lead to involved mathematical issues, but in many cases optimization procedures and/or explicit regularization can be avoided. Furthermore, some of these "direct methods" can also recover more general sources \cite{Wan17,Zha18} when $\epsilon < 0$, i.e.,  for the Helmholtz equation with multi-frequency data.   
%For further information

% Even though \eqref{eq1}-\eqref{eq2} is ill-posed, several methods for computing reliable results have been developed. In \cite{babda09,ling05} the authors assume a priori that $f$ is composed of a given number of point sources, which leads to a problem with a non-linear optimality system. Later, in \cite{Abdel15} this approach was extended to sources having compact support within a small number of finite subdomains. 

As an alternative to searching for point sources, the authors of \cite{cheng15,song12} restrict the control domain to a subdomain and introduce a Kohn-Vogelius fidelity term. In \cite{hinze19} the authors also use a Kohn-Vogelius functional, but instead of restricting the control domain, they search for the source term closest to a given prior. %$f^*$. 

The related problem of determining the interface between two regions with constant densities (sources) has also been studied \cite{kun94, ring95}. More specifically, in these investigations $f$ has the form $f(x)=\rho_1, \, \rho_2$ in $\Omega_1, \, \Omega_2$, respectively, and one seeks to identify the subdomains $\Omega_1$ and $\Omega_2$. 
Here, $\rho_1$ and $\rho_2$ are given constants. Moreover, since the 1990s very sophisticated analyses have been undertaken in order to further determine information about the support of the source term from boundary data, see, e.g., \cite{Han11,Het96,BIsa05}. 
%This also leads to an optimization problem with a non-linear optimality system. 

% The multi-frequency inverse source problem have been studied in e.g. \cite{Acosta_2012, bao10},
% where one observe boundary data associated with several state equations in the form
% $-\nabla u + ku = f$, where $k > 0$ is referred to as the \textit{frequency}.

%Source identification methods which do not rely on any optimization procedure or explicit regularization have also been developed. In particular for problems with $\epsilon < 0$, i.e., for the Helmholtz equation, see \cite{Wan17,Zha18,Zha19}. Such approaches are referred to as "direct methods".  

In this paper we will not make any assumptions about the form of the control $f$, nor restrict the control domain. Instead we introduce a weighting $\WW$, also referred to as a regularization operator, in the regularization term. This enables us to locate a single local source positioned anywhere in the domain without any prior knowledge about its position. More specifically, we will show that, if the true source equals any of the basis functions used to discretize the control, then the inverse solution will be closer to the true source, in $L^2$-sense, than a \textcolor{black}{particular} function which achieves its maximum at the same location as the true source. \textcolor{black}{This particular function can be derived from the outcome of applying standard Tikhonov regularization.}
Numerical experiments indicate that our scheme also can identify several well-separated and isolated (local) sources, but we do not have a rigorous mathematical analysis covering such cases. Our approach leads to a standard quadratic optimization problem. % and is thus 'simple' to solve. 

The projection onto the orthogonal complement of the nullspace of the forward operator 
$$
\KK: F_h \rightarrow L^2(\partial \Omega), \quad
f \mapsto u|_{\partial \Omega},$$ 
associated with \eqref{eq1}-\eqref{eq2}, plays an important role in our study. More precisely, %in a finite dimensional setting, 
our regularization operator $\WW$ can be interpreted as a scaling of the basis functions for $F_h$.  This scaling is such that the lengths of the projections of the modified basis functions, onto the orthogonal complement of the nullspace of $\KK$, is the same. 

%The purpose of this paper is to construct a regularization operator $\WW$ such that the recovery of internal sources is improved. %, compared with results obtained with $\WW=\II$. 
This investigation is further motivated in section \ref{section:motivation}, and our regularization operator is derived in section \ref{section:regularization_operator}, which also clarifies why we use a finite dimensional space $F_h$ for the control $f$ in \eqref{eq1}-\eqref{eq2}. Section \ref{section:analysis} is devoted to an analysis of the new methodology. The numerical experiments are presented in section \ref{section:numerical_experiments}, and section \ref{section:discussion} contains a brief summary and an open problem.   

\section{Motivation} 
\label{section:motivation}
The right panel in Figure \ref{fig:square_ST} shows the numerical solution $f_h$ of \eqref{eq1}-\eqref{eq2} when the true source $f_{\mathrm{true}}$ is as depicted in the left panel. In these computations we employed standard Tikhonov regularization, i.e., $\WW=\II$ and \textcolor{black}{$\epsilon=\alpha=10^{-3}$}. More specifically, we solved \eqref{eq1}-\eqref{eq2} numerically with $d = u_{\mathrm{true}}|_{\partial \Omega}$, where $u_{\mathrm{true}}$ denotes the numerical solution of the boundary value problem \eqref{eq2} with $f=f_{\mathrm{true}}$. We observe that, even in the noise free situation, we can not recover the position of the true source when standard Tikhonov regularization is used, and the computed source $f_h$ is mainly located at the boundary $\partial  \Omega$ of $\Omega$, even though the true source $f_{\mathrm{true}}$ has it support in the interior of $\Omega$. The mathematical explanation for this is as follows. 

\begin{figure}
    \centering
    \begin{subfigure}[b]{0.48\linewidth}        %% or \columnwidth
        \centering
        \includegraphics[width=\linewidth]{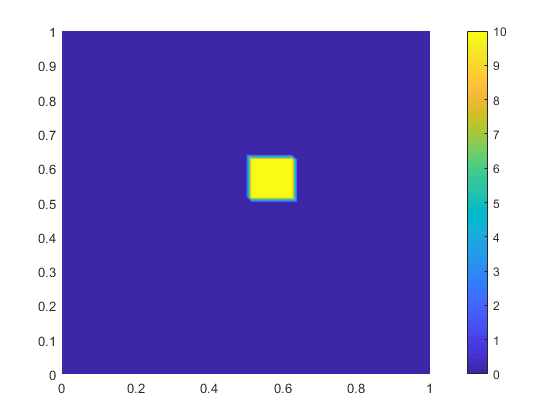}
        \caption{True source}
        \label{fig:square_ST_a}
    \end{subfigure}
    \begin{subfigure}[b]{0.48\linewidth}        %% or \columnwidth
        \centering
        \includegraphics[width=\linewidth]{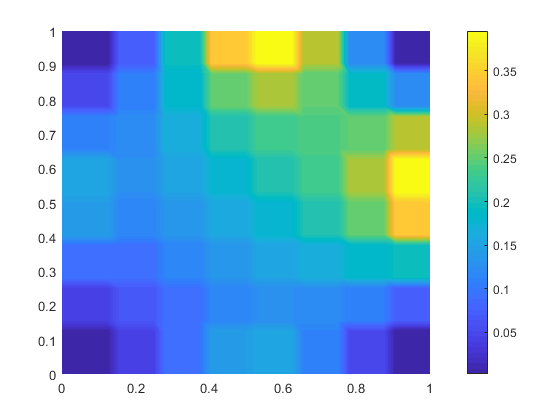}
        \caption{Inverse solution}
        %\label{fig:1B}
    \end{subfigure}
    \caption{Comparison of the true source and the inverse solution using standard Tikhonov regularization with $\alpha = 10^{-3}$.}
    \label{fig:square_ST}
\end{figure}

% Let us further motivate our investigation. 
Consider the following %infinitely dimensional control space 
(continuous) $L^2$-version of our source identification problem with standard Tikhonov regularization: 
\begin{equation}
    \label{eq3}
    \min_{(f,u) \in L^2(\Omega) \times H^1(\Omega)} \left\{ \frac{1}{2}\|u-d\|_{L^2(\partial\Omega)}^2 + \frac{1}{2}\alpha\| f\|_{L^2(\Omega)}^2 \right\}
\end{equation}
subject to \eqref{eq2}. Throughout this paper we assume that $\Omega$ is a bounded domain with a piecewise smooth boundary $\partial \Omega$. %and that the data $d$ has been 'normalized' such that 
% \[
% \int_{\partial \Omega} d \, dS = 0, 
% \]
% \textcolor{red}{see appendix \ref{XX} for further information about this topic}.  

We denote the mapping $f \mapsto u|_{\partial\Omega}$, associated with \textcolor{black}{\eqref{eq2} and \eqref{eq3}}, 
by the forward operator 
\begin{equation}
  \LKK: L^2(\Omega) \rightarrow L^2(\partial\Omega). \nonumber
\end{equation} 
More precisely, $\LKK f = u|_{\partial\Omega}$, where $u$ is the unique (weak) solution of the boundary value problem \eqref{eq2}. 
% which satisfies the normalization condition 
% \begin{equation}
% \label{eq4}
% \int_{\partial \Omega} u \, dS = 0, 
% \end{equation}
% \textcolor{red}{cf. appendix \ref{XX}.}

The nullspace $\NK$ of $\LKK$ consists of functions $f \in L^2(\Omega)$ which yield solutions of \eqref{eq2} with
zero trace on $\partial\Omega$. If we define the space $\Npde$ as
\begin{equation}
    \Npde = \left\{\psi \in C^2(\overline{\Omega}): \, \psi = \frac{\partial\psi}{\partial\mathbf{n}} = 0 \textnormal{ on } \partial\Omega \right\}, \nonumber
\end{equation}
we observe that\footnote{If $\psi \in C^2(\overline{\Omega})$ and $\Omega$ is bounded, then $\Delta \psi \in L^2(\Omega)$.} 
\begin{equation}
   Q = \left\{q = -\Delta \psi + \epsilon \psi, \, \psi \in \Npde\right\} \subset \NK. \label{eq:NK} 
\end{equation}

Let $(f_{\alpha}^*,u_{\alpha}^*)$ denote the solution of \textcolor{black}{\eqref{eq2} and \eqref{eq3}}, $\alpha>0$, and assume that the limit 
\[
\lim_{\alpha \rightarrow 0} f_{\alpha}^* = f^* = \LKK^\dagger d 
\]
is a $C^2$-function, i.e., $f^* \in C^2(\overline{\Omega})$. 
Here, $\LKK^{\dagger}$ denotes the Moore-Penrose inverse of $\LKK$.
From standard theory we know that the minimum norm least-squares solution $f^*$ belongs to the orthogonal complement of the nullspace of $\LKK$, i.e., $f^* \in \NKT$, or  
\begin{equation}
    (f^*, q)_{L^2(\Omega)} = 0 \quad \forall q \in Q \subset \NK, \nonumber
\end{equation}
which implies that
\begin{equation}
    (f^*, -\Delta \psi + \epsilon \psi)_{L^2(\Omega)} = 0 \quad \forall \psi \in \Npde. \nonumber
\end{equation}
Invoking integration by parts/Green's formula yields that 
\begin{equation}
    (-\Delta f^* + \epsilon f^*,\psi)_{L^2(\Omega)} = 0 \quad \forall \psi \in \Npde, \nonumber
\end{equation}
and we can conclude that $$-\Delta f^* + \epsilon f^* = 0 \quad \mbox{in } \Omega.$$ 

Standard maximum principles for elliptic PDEs thus assure that $f^*$ cannot attain a non-negative maximum\footnote{If $\epsilon=0$, then $f^*$ will achieve its maximum on the boundary $\partial \Omega$.} in the interior of $\Omega$, see, e.g., Theorem 4.10 in \cite{BRen93}. For small $\alpha>0$, $f_{\alpha}^* \approx f^*$, and the use of ordinary Tikhonov regularization will therefore fail to identify internal sources. This explains the results reported in Figure \ref{fig:square_ST}. 

The paper \cite{Bad98} contains results related to the analysis presented in this section: For $\epsilon=\alpha=0$, \cite{Bad98} clarifies the role of harmonic sources. \textcolor{black}{Also note that the argument presented above does not hold when $\epsilon < 0$, i.e., it can not be applied to problems involving the Helmholtz equation (because maximums principles are not readily available).}

\section{The regularization operator}
\label{section:regularization_operator} 
Motivated by the results presented above, we will now construct a regularization operator $\WW$ better suited for recovering internal sources. However, for the sake of generality, we proceed by considering the following abstract operator equation:   
\begin{equation}
\label{A1}
\KK \xx = \bb, 
\end{equation}
where $\KK: X \rightarrow Y$ is a linear operator with a {\em nontrivial nullspace} and possibly very small singular values. The real vector spaces $X$ and $Y$ are finite dimensional and $\bb \in Y$. 

Employing modified Tikhonov regularization yields the problem 
\begin{equation}
    \label{A2}
\zz_{\alpha} = \argmin_{\zz} \left\{ \frac{1}{2} \| \KK \zz - \bb \|_Y^2 + \frac{1}{2} \alpha \| \WW \zz \|_X^2 \right\}, 
\end{equation}
where $\alpha >0$ is a regularization parameter, $\WW:X \rightarrow X$ is an invertible regularization operator, and $\| \cdot \|_X$ and $\| \cdot \|_Y$ denote the norms of $X$ and $Y$, respectively. 

By defining 
\[
\yy_{\alpha}=\WW \zz_{\alpha}, 
\]
we get the problem  
\begin{equation}
 \label{A2.1}
\yy_{\alpha} = \argmin_{\yy} \left\{ \frac{1}{2} \| \KK \WW^{-1} \yy - \bb \|_Y^2 + \frac{1}{2} \alpha \| \yy \|_X^2 \right\}, 
\end{equation}
and according to standard theory for Tikhonov regularization, see, e.g., \cite{BEng96}, 
\[
\lim_{\alpha \rightarrow 0} \yy_{\alpha} = \yy^* = (\KK \WW^{-1})^{\dagger} \bb. 
\] 
Hence, the introduction of $\WW$ in the regularization term in \eqref{A2} motivates us to consider the related equation 
\begin{equation}
\label{A3}
\KK \WW^{-1} \yy = \bb. 
\end{equation}
We will now use \eqref{A3} to motivate a particular choice of a regularization operator $\WW$. 

Let $\pphi_1, \, \pphi_2, \, \ldots, \pphi_n$ be a basis for $X$, i.e., 
\[
X=\mathrm{span} \{ \pphi_1, \, \pphi_2, \, \ldots, \pphi_n \}.
\]
The minimum norm least-squares solution
\[
\xx^* = \sum_{i=1}^n x_i^* \pphi_i = \KK^{\dagger} \bb 
\]
of \eqref{A1} belongs to the orthogonal complement of the nullspace of $\KK$, i.e.,  
\[
\xx^* \in \nullspace{\KK}^{\perp}. 
\]
Hence, if 
\begin{equation}
\label{A3.1}
\PP: X \rightarrow \nullspace{\KK}^{\perp}
\end{equation}
denotes the orthogonal projection, then 
\begin{align*}
    \xx^* &= \PP \xx^* \\
    &= \sum_{i=1}^n x_i^* \, \PP \pphi_i. 
\end{align*}
Note that the norm $\| \PP \pphi_i \|_X$ of $\PP \pphi_i$ depends on the angle between $\pphi_i$ and $\nullspace{\KK}^{\perp}$. Roughly speaking, the minimum norm least-squares solution $\xx^*$ will typically be dominated by the basis vectors which has a relatively small angle to $\nullspace{\KK}^{\perp}$ -- one may say that the basis $\pphi_1, \, \pphi_2, \, \ldots, \pphi_n$ is biased because these functions' contributions to $\xx^*$ are depending on the norms of their projections onto the orthogonal complement of the nullspace of $\KK$. 

Let us now assume that 
\[
\| \PP \phi_i \|_X \neq 0, \, i=1,2,\ldots,n, 
\]
and note that the scaled basis 
\[
\hphi_i = \frac{\phi_i}{\| \PP \pphi_i \|_X}, \, i=1,2,\ldots,n,
\]
has the property 
\begin{equation}
\label{A3.9}
\| \PP \hphi_i \|_X = 1, \, i=1,2,\ldots,n. 
\end{equation}
Consider equation \eqref{A3}, where 
\[
\yy =\sum_{i=1}^n y_i \pphi_i. 
\]
Motivated by the previous paragraph, we want to choose the regularization operator $\WW$ such that the projection $\PP (\WW^{-1} \yy)$ of $\WW^{-1} \yy$ onto $\nullspace{\KK}^{\perp}$ is a sum of the components $y_1, y_2, \ldots, y_n$ of $\yy$ times vectors which have equal length. This is accomplished as follows:  
Provided that the linear regularization operator $\WW: X \rightarrow X$ is defined by 
\begin{equation}
    \label{A4}
    \WW \pphi_i = \| \PP \pphi_i \|_X \pphi_i, \quad i=1,2,\ldots, n, 
\end{equation}
we find that 
\begin{align*}
    \PP (\WW^{-1} \yy) &= \sum_{i=1}^n y_i \PP (\WW^{-1} \pphi_i) \\
    &= \sum_{i=1}^n y_i \PP (\| \PP \pphi_i \|_X^{-1} \pphi_i) \\ 
    &= \sum_{i=1}^n y_i \PP \hhphi_i, 
\end{align*}
where all the involved projections $\PP \hhphi_1, \, \PP \hhphi_2, \, \ldots, \PP \hhphi_n$ have length one  \eqref{A3.9}. (Appendix \ref{alternative-motivation} contains an alternative motivation for the definition \eqref{A4} of the regularization operator $\WW$.)

The discussion presented above can be generalized to separable Hilbert spaces, assuming that none of the basis functions $\{ \phi_1, \, \phi_2, \, \ldots \}$ belong to the nullspace of $\LKK$. In fact, in order to obtain a 'reasonable' regularization operator $\WW$, as defined in \eqref{A4}, one should make sure that $\min_i \{ \| \PP \phi_i \|_X \}$ does not become too small, relative to the noise level in $\bb$: If $\| \PP \phi_i \|_X$ is very small, one tries to recover the contribution associated with a basis function $\phi_i$ which is almost in the nullspace of the forward operator. Hence, in order to not get 'too close' to the nullspace, one would typically use a rather moderate number $n$ of basis functions, and these basis functions should have a relatively significant support. This is our motivation for employing a finite dimensional space for the control $f$ in \eqref{eq1}-\eqref{eq2}.  

%\textcolor{red}{\begin{remark} Something about that computing $P$ involves solving an ill-posed problem. Reference to \cite{Acosta_2012}.\end{remark}}

Let $\WW$ be as defined in \eqref{A4}. In the next sections we will explore the following three methods for identifying sources: %'handling' the nullspace of the operator $\KK$ in \eqref{A1}:  % choosing appropriate nullspace contributions associated with ( solving \eqref{A1}: 
\begin{description}
\item[Method I:] We compute 
\[
\WW^{-1} \xx_{\alpha}, 
\]
where 
\[
\xx_{\alpha} = \argmin_{\xx} \left\{ \frac{1}{2} \| \KK \xx - \bb \|_Y^2 + \frac{1}{2} \alpha \| \xx \|_X^2 \right\}, 
\]
i.e., $\xx_{\alpha}$ is the outcome of standard Tikhonov regularization. 
\item[Method II:] We compute
\[
\yy_{\alpha} = \argmin_{\yy} \left\{ \frac{1}{2} \| \KK \WW^{-1} \yy - \bb \|_Y^2 + \frac{1}{2} \alpha \| \yy \|_X^2 \right\}. 
\]
\item[Method III:] We compute
\[
\zz_{\alpha} = \argmin_{\zz} \left\{ \frac{1}{2} \| \KK \zz - \bb \|_Y^2 + \frac{1}{2} \alpha \| \WW \zz \|_X^2 \right\}, 
\]
i.e., $\zz_{\alpha} = \WW^{-1} \yy_{\alpha}$. 
\end{description}
It turns out that these methods yield rather similar visual results for recovering a single well-localized source: see Figure \ref{fig:square_new} for the recovery of the true source depicted in panel (a) in Figure \ref{fig:square_ST}.
Methods II and III work better for identifying several local sources than Method I. This will be exemplified in the numerical experiments section. 

As we will see in the next section, our analyses of methods II and III rely on the investigation of Method I. 

\section{Analysis}
\label{section:analysis}
We will now investigate whether Method I, Method II and Method III
%the regularization operator $\WW$, defined in \eqref{A4}, 
can recover the individual basis functions $\pphi_1, \, \pphi_2, \, \ldots,\pphi_n$. More precisely, if the right-hand-side $\bb$ in \eqref{A1} equals the image of $\pphi_j$ under $\KK$, i.e., $$\bb=\KK \pphi_j,$$ can we employ these methods to roughly recover the 'position' of $\pphi_j$? 
When standard Tikhonov regularization is used, i.e., when $\WW = \II$, the numerical experiments and the analysis presented in section \ref{section:motivation} show that this is not necessarily the case. Ideally, we would like to analyze the recovery of vectors in a large subset of $X$ from their images in $Y$ under $\KK$, but we have not been able to do so.

For the sake of simplicity, we consider the limit case $\alpha \rightarrow 0$ in this section. Our analysis thus address some mathematical properties of the minimum norm least-squares solutions of the linear problems associated with methods I, II and III.    

The simple result presented in our first lemma is not explicitly formulated in standard texts. For the sake of completeness, and since it will be used below, we now prove: 
\begin{lemma}\label{lem:easyEq}
   Let $\mathcal{A}: H_1 \rightarrow H_2$ be a bounded linear operator, where $H_1$ and $H_2$ are Hilbert spaces. 
   For any $\psi \in H_1$, the minimum norm least-squares solution of 
   \begin{equation}
   \label{easyEq}
       \mathcal{A}u = \mathcal{A} \psi   
   \end{equation}
   is $$u^* = \mathcal{P}\psi,$$ where $\mathcal{P}: H_1 \rightarrow \nullspace{\mathcal{A}}^\bot$ denotes the  orthogonal projection of elements in $H_1$ onto the orthogonal complement of the nullspace of $\mathcal{A}$. 
\end{lemma}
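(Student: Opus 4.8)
The plan is to reduce the least-squares problem for \eqref{easyEq} to an \emph{exact} equation and then exploit the orthogonal decomposition of $H_1$. First I would note that the right-hand side $\mathcal{A}\psi$ lies in the range of $\mathcal{A}$, so the residual functional $u \mapsto \|\mathcal{A}u - \mathcal{A}\psi\|_{H_2}$ attains its infimum, the value $0$. Consequently the set of least-squares solutions of \eqref{easyEq} coincides with the set of exact solutions, namely $\{\psi + v : v \in \nullspace{\mathcal{A}}\}$, and ``minimum norm least-squares solution'' here simply means the element of smallest $H_1$-norm in this affine subspace.

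Next I would use that $\mathcal{A}$ is bounded, hence $\nullspace{\mathcal{A}}$ is closed and $H_1 = \nullspace{\mathcal{A}} \oplus \nullspace{\mathcal{A}}^\bot$, so the orthogonal projection $\mathcal{P}$ onto $\nullspace{\mathcal{A}}^\bot$ is well defined. Writing $\psi = \mathcal{P}\psi + (\psi - \mathcal{P}\psi)$ with $\psi - \mathcal{P}\psi \in \nullspace{\mathcal{A}}$, every solution can be written as $u = \mathcal{P}\psi + w$ where $w = (\psi - \mathcal{P}\psi) + v$ ranges over all of $\nullspace{\mathcal{A}}$ as $v$ does. Since $\mathcal{P}\psi \in \nullspace{\mathcal{A}}^\bot$ is orthogonal to $w \in \nullspace{\mathcal{A}}$, the Pythagorean identity gives $\|u\|_{H_1}^2 = \|\mathcal{P}\psi\|_{H_1}^2 + \|w\|_{H_1}^2$, which is minimized precisely when $w = 0$, i.e. $u^* = \mathcal{P}\psi$. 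I would then confirm admissibility: $\mathcal{A}u^* = \mathcal{A}\mathcal{P}\psi = \mathcal{A}\psi - \mathcal{A}(\psi - \mathcal{P}\psi) = \mathcal{A}\psi$, because $\psi - \mathcal{P}\psi \in \nullspace{\mathcal{A}}$.

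There is no genuine obstacle in this argument; the only points requiring care are the standing facts that a bounded operator has a closed kernel (so that $\mathcal{P}$ and the orthogonal decomposition make sense) and that the data $\mathcal{A}\psi$ is consistent, which is what lets ``least-squares solution'' collapse to ``exact solution.'' Alternatively, one could invoke the standard characterization that $u$ is the minimum-norm least-squares solution of $\mathcal{A}u = b$ if and only if $u \in \nullspace{\mathcal{A}}^\bot$ and $\mathcal{A}u$ equals the orthogonal projection of $b$ onto $\overline{\operatorname{Range}\mathcal{A}}$; since $\mathcal{P}\psi$ manifestly satisfies both conditions (with $b = \mathcal{A}\psi$ already in the range), this gives the conclusion immediately.
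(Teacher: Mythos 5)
Your proposal is correct and follows essentially the same route as the paper's proof: verify that $\mathcal{P}\psi$ solves the equation because $\psi-\mathcal{P}\psi\in\nullspace{\mathcal{A}}$, note that all solutions differ from it by a nullspace element, and conclude by the Pythagorean identity. Your explicit remarks that consistency of the data makes least-squares solutions coincide with exact solutions, and that boundedness of $\mathcal{A}$ ensures $\nullspace{\mathcal{A}}$ is closed so $\mathcal{P}$ is well defined, are minor refinements the paper leaves implicit.
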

\begin{proof} 
We observe that 
\begin{align*} 
\mathcal{A} \mathcal{P}\psi &= \mathcal{A} ((\mathcal{P}\psi-\psi)+\psi) = \mathcal{A} \psi
\end{align*}
because $(\mathcal{P}\psi-\psi) \in \nullspace{\mathcal{A}}$. Hence, $\mathcal{P}\psi$ is a solution of \eqref{easyEq}. Since $\mathcal{A}$ is linear, any other solution of \eqref{easyEq} can be written in the form 
$\mathcal{P}\psi + \tau$, for some $\tau \in \nullspace{\mathcal{A}}$, which has norm
\[
\| \mathcal{P}\psi + \tau \|_{H_1} = \sqrt{\| \mathcal{P}\psi \|_{H_1}^2 + \| \tau \|_{H_1}^2} \geq \| \mathcal{P}\psi \|_{H_1}.
\]
\end{proof}

We will now prove that Method I can recover the individual basis functions in the sense that $\WW^{-1} \xx^*$ attains its maximum at the correct position. More precisely, $\WW^{-1} \xx^*$ attains its maximum for the correct index.   
\begin{theorem} {\bf (Method I).}
Let $\WW$ be the regularization operator defined in \eqref{A4} and assume the that the basis $\mathcal{B}=\{ \pphi_1, \pphi_2, \ldots, \pphi_n\}$ is orthonormal. Then, for any $j \in \{1,2, \ldots, n\}$, the minimum norm least-squares solution $\xx_j^*$ of 
\begin{equation}
\label{recoveryEq}
\KK \xx = \KK \pphi_j
\end{equation}
satisfies 
\[
\WW^{-1} \xx_j^* = \| \PP \phi_j \|_X \sum_{i=1}^{n} \left( \frac{\PP \phi_j}{\| \PP \phi_j \|_X}, \frac{\PP \pphi_i}{\| \PP \phi_i \|_X} \right)_X  \pphi_i,  
\]
where $\PP: X \rightarrow \nullspace{\KK}^{\perp}$ denotes the orthogonal projection of elements in $X$ onto the orthogonal complement of the nullspace of $\KK$. 
Hence, 
\[
j \in \argmax_{i \in \{1,2, \ldots, n\}} \left( \WW^{-1} \xx_j^* (i) \right),
\]
where $\WW^{-1} \xx_j^* (i)$ denotes the $i$'th component of the vector $[\WW^{-1} \xx_j^*]_{\mathcal{B}} \in \mathbb{R}^n$.
\end{theorem}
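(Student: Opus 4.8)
The plan is to combine Lemma~\ref{lem:easyEq} with the definition \eqref{A4} of $\WW$ and a change-of-basis computation. First I would apply Lemma~\ref{lem:easyEq} with $\mathcal{A}=\KK$, $H_1=X$, $H_2=Y$, and $\psi=\pphi_j$: this immediately gives that the minimum norm least-squares solution of \eqref{recoveryEq} is $\xx_j^* = \PP\pphi_j$. So the content of the theorem is really just to expand $\WW^{-1}\PP\pphi_j$ in the orthonormal basis $\mathcal{B}$ and read off where its largest component sits.

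Next I would compute the coordinates of $\WW^{-1}\PP\pphi_j$ with respect to $\mathcal{B}$. Since $\mathcal{B}$ is orthonormal, the $i$'th component of any $v\in X$ is $(v,\pphi_i)_X$, so I need $(\WW^{-1}\PP\pphi_j,\pphi_i)_X$. The key observation is that $\WW$ is \emph{self-adjoint} with respect to the inner product $(\cdot,\cdot)_X$: by \eqref{A4}, $\WW$ is diagonal in the orthonormal basis $\mathcal{B}$ with eigenvalues $\|\PP\pphi_i\|_X$, hence so is $\WW^{-1}$, with eigenvalues $\|\PP\pphi_i\|_X^{-1}$, and both are self-adjoint. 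Therefore
\[
(\WW^{-1}\PP\pphi_j,\pphi_i)_X = (\PP\pphi_j,\WW^{-1}\pphi_i)_X = \frac{1}{\|\PP\pphi_i\|_X}(\PP\pphi_j,\pphi_i)_X.
\]
Now I would use the fact that $\PP$ is an orthogonal projection (so $\PP=\PP^2=\PP^*$) to rewrite $(\PP\pphi_j,\pphi_i)_X=(\PP\pphi_j,\PP\pphi_i)_X$. Substituting, the $i$'th coordinate of $\WW^{-1}\xx_j^*$ is $\|\PP\pphi_i\|_X^{-1}(\PP\pphi_j,\PP\pphi_i)_X$, and factoring out $\|\PP\pphi_j\|_X$ inside both slots gives exactly the claimed expansion
\[
\WW^{-1}\xx_j^* = \|\PP\pphi_j\|_X\sum_{i=1}^n\left(\frac{\PP\pphi_j}{\|\PP\pphi_j\|_X},\frac{\PP\pphi_i}{\|\PP\pphi_i\|_X}\right)_X\pphi_i.
\]

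Finally, for the $\argmax$ claim, I would invoke the Cauchy--Schwarz inequality on the coefficient of $\pphi_i$: since the two arguments $\PP\pphi_j/\|\PP\pphi_j\|_X$ and $\PP\pphi_i/\|\PP\pphi_i\|_X$ are unit vectors in $X$, their inner product is at most $1$, with equality when $i=j$. Hence the $j$'th coordinate equals $\|\PP\pphi_j\|_X$ and dominates every other coordinate (which is at most $\|\PP\pphi_j\|_X$ times something $\le 1$), so $j\in\argmax_i(\WW^{-1}\xx_j^*(i))$. I do not expect any real obstacle here; the only points that need a little care are justifying the self-adjointness of $\WW^{-1}$ (immediate from diagonality in an orthonormal basis) and the identity $(\PP\pphi_j,\pphi_i)_X=(\PP\pphi_j,\PP\pphi_i)_X$ (immediate from $\PP$ being an orthogonal projection), plus noting that the maximizing index need not be unique, which is why the statement is phrased with $j\in\argmax$ rather than equality.
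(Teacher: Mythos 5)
Your proposal is correct and follows essentially the same route as the paper: Lemma~\ref{lem:easyEq} gives $\xx_j^* = \PP\pphi_j$, and then the expansion in the orthonormal basis together with the diagonal definition \eqref{A4} of $\WW$ and the identity $(\PP\pphi_j,\pphi_i)_X=(\PP\pphi_j,\PP\pphi_i)_X$ yields the stated formula (whether one applies $\WW^{-1}$ to the expansion, as the paper does, or moves it across the inner product via self-adjointness, as you do, is an immaterial difference). Your explicit Cauchy--Schwarz argument for the $\argmax$ claim is a welcome detail that the paper leaves implicit.
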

\begin{proof}
  Invoking Lemma \ref{lem:easyEq}, the assumption that the basis is orthonormal, the definition \eqref{A4} of $\WW$ and basic properties of projections it follows that 
  \begin{align}
  \nonumber
      \WW^{-1} \xx_j^* &= \WW^{-1} \PP \pphi_j \\ 
      \nonumber
      &= \WW^{-1} \sum_{i=1}^{n} (\PP \phi_j,\pphi_i)_X \pphi_i \\ 
      \nonumber
      &= \sum_{i=1}^{n} (\PP \phi_j,\pphi_i)_X \| \PP \phi_i \|_X^{-1} \pphi_i \\
      \nonumber
      &= \sum_{i=1}^{n} (\PP \phi_j, \PP \pphi_i)_X \| \PP \phi_i \|_X^{-1} \pphi_i \\
      \label{basicEq}
      &=\| \PP \phi_j \|_X \sum_{i=1}^{n} \left( \frac{\PP \phi_j}{\| \PP \phi_j \|_X}, \frac{\PP \pphi_i}{\| \PP \phi_i \|_X} \right)_X  \pphi_i.
  \end{align}
\end{proof}

% Recall that the change of variables 
% \[
% \yy_{\alpha}=\WW \zz_{\alpha}  
% \]
% enables us to write \eqref{A2} in the form \eqref{A2.1}, which 
Method II involves the operator $\KK \WW^{-1}$. In the argument presented below we use the orthogonal projection $\tilde{\PP}$ onto the orthogonal complement of the nullspace of $\KK \WW^{-1}$, 
\[
\tilde{\PP}: X \rightarrow \nullspace{\KK \WW^{-1}}^{\perp}.
\]

We will now prove that a scaled version of Method II yields a solution which, in norm sense, is better than the outcome of Method I. 
\begin{theorem} {\bf (Method II).} 
\label{theorem:minnorm-y}
  Assume that $\{ \pphi_1, \, \pphi_2, \, \ldots, \pphi_n \}$ is an {\em orthonormal  basis} and let $\WW$ be the operator defined in \eqref{A4}. Then the minimum norm least-squares solution $\yy_j^*$ of 
  \begin{equation}
\label{BB1}
\KK \WW^{-1} \yy = \KK \pphi_j, 
\end{equation}
satisfies 
\begin{align*}
    \left\| \pphi_j - \frac{\yy_j^*}{\| \PP \pphi_j \|_X} \right\|_X \leq  \| \pphi_j - \WW^{-1} \xx_j^* \|_X,  
\end{align*}
where $\xx_j^*$ is the minimum norm least-squares solution of \eqref{recoveryEq} and $\WW^{-1} \xx_j^*$ can be written in the form \eqref{basicEq}. 
\end{theorem}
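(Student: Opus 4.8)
The plan is to reduce the claimed inequality to a statement about orthogonal projections, by first identifying $\yy_j^*$ explicitly using Lemma \ref{lem:easyEq} and then comparing the two error terms as elements of $X$.

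First I would apply Lemma \ref{lem:easyEq} to the operator $\mathcal{A} = \KK \WW^{-1}$ and the vector $\psi = \WW \pphi_j$ (so that $\mathcal{A}\psi = \KK \WW^{-1} \WW \pphi_j = \KK \pphi_j$, matching the right-hand side of \eqref{BB1}). This gives $\yy_j^* = \tilde{\PP}(\WW \pphi_j) = \| \PP \pphi_j \|_X \, \tilde{\PP} \pphi_j$, using the definition \eqref{A4} of $\WW$. Dividing by $\| \PP \pphi_j \|_X$, the left-hand side of the claimed inequality becomes $\| \pphi_j - \tilde{\PP} \pphi_j \|_X$, which is exactly the distance from $\pphi_j$ to the subspace $\nullspace{\KK \WW^{-1}}^{\perp}$, i.e., the norm of the component of $\pphi_j$ lying in $\nullspace{\KK \WW^{-1}}$.

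Next I would rewrite the right-hand side in similar terms. By the Theorem on Method I, $\xx_j^* = \PP \pphi_j$, so $\WW^{-1} \xx_j^* = \WW^{-1} \PP \pphi_j$, and the right-hand side is $\| \pphi_j - \WW^{-1} \PP \pphi_j \|_X$. The key observation is that $\WW^{-1} \PP \pphi_j$ lies in $\WW^{-1}(\nullspace{\KK}^{\perp})$, and more importantly, since $\tilde{\PP} \pphi_j$ is the \emph{best} approximation to $\pphi_j$ from $\nullspace{\KK \WW^{-1}}^{\perp}$, it suffices to show that $\WW^{-1} \PP \pphi_j \in \nullspace{\KK \WW^{-1}}^{\perp}$; then the inequality follows immediately from the minimizing property of the orthogonal projection $\tilde{\PP}$. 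To see this membership: one checks that $\WW^{-1}(\nullspace{\KK}^{\perp}) \subseteq \nullspace{\KK \WW^{-1}}^{\perp}$ is \emph{not} generally true (since $\WW^{-1}$ is not orthogonal), so instead I would argue more carefully. The correct route: $\nullspace{\KK \WW^{-1}} = \WW(\nullspace{\KK})$, so $\nullspace{\KK \WW^{-1}}^{\perp} = \WW(\nullspace{\KK})^{\perp}$. A vector $v$ lies in this orthogonal complement iff $(v, \WW w)_X = 0$ for all $w \in \nullspace{\KK}$, i.e., iff $\WW v \perp \nullspace{\KK}$ (using that $\WW$ is self-adjoint, which holds because $\WW$ is diagonal in the orthonormal basis $\mathcal{B}$). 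Taking $v = \WW^{-1} \PP \pphi_j$ gives $\WW v = \PP \pphi_j \in \nullspace{\KK}^{\perp}$, which is precisely what we need. Hence $\WW^{-1} \PP \pphi_j \in \nullspace{\KK \WW^{-1}}^{\perp}$, and since $\tilde{\PP}\pphi_j$ is the closest point of that subspace to $\pphi_j$, we conclude
\[
\left\| \pphi_j - \frac{\yy_j^*}{\| \PP \pphi_j \|_X} \right\|_X = \| \pphi_j - \tilde{\PP} \pphi_j \|_X \leq \| \pphi_j - \WW^{-1} \PP \pphi_j \|_X = \| \pphi_j - \WW^{-1} \xx_j^* \|_X.
\]

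The main obstacle I anticipate is getting the self-adjointness of $\WW$ and the identity $\nullspace{\KK \WW^{-1}}^{\perp} = \WW(\nullspace{\KK})^{\perp}$ exactly right, together with the subtle point that $\WW^{-1}$ does \emph{not} map $\nullspace{\KK}^{\perp}$ into $\nullspace{\KK\WW^{-1}}^{\perp}$ in general — it is only the \emph{specific} vector $\WW^{-1}\PP\pphi_j$ that lands there, because applying $\WW$ to it returns something orthogonal to $\nullspace{\KK}$. Everything else is a routine application of Lemma \ref{lem:easyEq} and the variational characterization of orthogonal projections.
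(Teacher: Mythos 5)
Your proof is correct and follows essentially the same route as the paper's: you identify $\yy_j^*/\| \PP \pphi_j \|_X = \tilde{\PP}\pphi_j$ via Lemma \ref{lem:easyEq} (the paper gets this through the auxiliary equation $\KK\WW^{-1}\hat{\yy} = \KK\WW^{-1}\pphi_j$ and a rescaling, while you apply the lemma directly to $\psi = \WW\pphi_j$), and then combine the best-approximation property of $\tilde{\PP}$ with the membership $\WW^{-1}\xx_j^* \in \nullspace{\KK\WW^{-1}}^{\perp}$, established exactly as in the paper via self-adjointness of $\WW$. The only slip is your parenthetical claim that $\WW^{-1}(\nullspace{\KK}^{\perp}) \subseteq \nullspace{\KK\WW^{-1}}^{\perp}$ is not generally true: for a self-adjoint invertible $\WW$ your own computation shows it is in fact an equality, but this aside does not affect the argument.
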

\begin{proof}
  The minimum norm least-squares solution of the auxiliary problem 
\begin{equation}
\label{BB2}
\KK \WW^{-1} \hat{\yy} = \KK \WW^{-1}  \pphi_j \, (= \| \PP \pphi_j \|_X^{-1} \KK \pphi_j) 
\end{equation}
is, according to Lemma \ref{lem:easyEq}, 
\[
\hat{\yy}_j^* = \tilde{\PP} \pphi_j.  
\] 
Also, since $\tilde{\PP}$ is the orthogonal projection onto $\nullspace{\KK \WW^{-1}}^{\perp}$, 
\[
\| \pphi_j - \hat{\yy}_j^* \|_X \leq \| \pphi_j - r \|_X \, \mbox{for all } r \in \nullspace{\KK \WW^{-1}}^{\perp}.
\]

From \eqref{BB2} we find that 
\[
\KK \WW^{-1} (\| \PP \pphi_j \|_X \hat{\yy}) = \KK \pphi_j,
\]
and therefore there is as simple connection between the minimum norm least-squares solutions $\yy_j^*$ and $\hat{\yy}_j^*$ of \eqref{BB1} and \eqref{BB2}, respectively: 
\[
\yy_j^* = \| \PP \pphi_j \|_X \hat{\yy}_j^*. 
\]
We thus conclude that 
\begin{equation}
\label{BB3}
\left\| \pphi_j - \frac{\yy_j^*}{\| \PP \pphi_j \|_X} \right\|_X \leq \| \pphi_j - r \|_X \, \mbox{for all } r \in \nullspace{\KK \WW^{-1}}^{\perp}.
\end{equation}

Observe that 
\[
q \in \nullspace{\KK} \iff \WW q \in \nullspace{\KK \WW^{-1}}. 
\]
We know that 
\[
\xx_j^* \in \nullspace{\KK}^{\perp}, 
\]
or 
\[
(\xx_j^*, q)_X = 0 \quad \mbox{for all } q \in \nullspace{\KK}. 
\]
The operator $\WW$ is self-adjoint and hence 
\[
(\WW^{-1}\xx_j^*, \WW q)_X = 0 \quad \mbox{for all } q \in \nullspace{\KK}. 
\]
We can thus conclude that $\WW^{-1}\xx_j^* \in \nullspace{\KK \WW^{-1}}^{\perp}$, and the result follows from \eqref{BB3}. 
\end{proof}

Finally, we use the analysis of Method II to also relate Method III to Method I: 
\begin{corollary} {\bf (Method III).}
\label{corollary:minnorm-z}
  Assume that $\{ \pphi_1, \, \pphi_2, \, \ldots, \pphi_n \}$ is an {\em orthonormal  basis} and let 
  \begin{align*}
\zz_{j,\alpha} &= \argmin_{\zz} \left\{ \frac{1}{2} \| \KK \zz - \KK \pphi_j \|_Y^2 + \frac{1}{2} \alpha \| \WW \zz \|_X^2 \right\}, \\
\zz_j^* &= \lim_{\alpha \rightarrow 0} \zz_{j,\alpha}, 
\end{align*}
where $\WW$ is defined in \eqref{A4}. Then 
\begin{align}
\label{BB4}
    \left\| \pphi_j - \zz_j^* \right\|_X \leq \frac{\| \PP \pphi_j \|_X}{\min_{i=1,2,\ldots,n} \| \PP \pphi_i \|_X} \| \pphi_j - \WW^{-1} \xx_j^* \|_X,  
\end{align}
where $\xx_j^*$ is the minimum norm least-squares solution of \eqref{recoveryEq} and $\WW^{-1} \xx_j^*$ can be written in the form \eqref{basicEq}. 
\end{corollary}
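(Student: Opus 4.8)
The plan is to deduce this from Theorem~\ref{theorem:minnorm-y} by undoing the substitution $\yy=\WW\zz$ and controlling the operator norm of $\WW^{-1}$. First I would note that replacing $\zz$ by $\WW^{-1}\yy$ in the definition of $\zz_{j,\alpha}$ turns the functional of Method~III into the functional of Method~II, so that $\zz_{j,\alpha}=\WW^{-1}\yy_{j,\alpha}$ with $\yy_{j,\alpha}=\argmin_{\yy}\{\tfrac12\|\KK\WW^{-1}\yy-\KK\pphi_j\|_Y^2+\tfrac12\alpha\|\yy\|_X^2\}$. Standard Tikhonov theory gives $\yy_{j,\alpha}\to\yy_j^*$ as $\alpha\to0$, where $\yy_j^*$ is the minimum norm least-squares solution of \eqref{BB1}; since $\WW^{-1}$ is a fixed continuous linear map on the finite dimensional space $X$, the limit commutes with $\WW^{-1}$, and hence $\zz_j^*=\WW^{-1}\yy_j^*$.

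Next, writing $c=\|\PP\pphi_j\|_X$ and using the definition \eqref{A4} of $\WW$ in the form $\pphi_j=c\,\WW^{-1}\pphi_j$, I would rewrite the error as
\[
\pphi_j-\zz_j^*=c\,\WW^{-1}\pphi_j-\WW^{-1}\yy_j^*=c\,\WW^{-1}\!\left(\pphi_j-\frac{\yy_j^*}{\|\PP\pphi_j\|_X}\right).
\]
Because the basis is orthonormal and $\WW^{-1}\pphi_i=\|\PP\pphi_i\|_X^{-1}\pphi_i$, expanding any $\uu=\sum_i u_i\pphi_i$ gives $\|\WW^{-1}\uu\|_X^2=\sum_i u_i^2\|\PP\pphi_i\|_X^{-2}\le(\min_i\|\PP\pphi_i\|_X)^{-2}\|\uu\|_X^2$; that is, $\WW^{-1}$ has operator norm at most $1/\min_i\|\PP\pphi_i\|_X$. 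Applying this estimate to $\uu=\pphi_j-\tfrac{\yy_j^*}{\|\PP\pphi_j\|_X}$ and then invoking the bound $\bigl\|\pphi_j-\tfrac{\yy_j^*}{\|\PP\pphi_j\|_X}\bigr\|_X\le\|\pphi_j-\WW^{-1}\xx_j^*\|_X$ from Theorem~\ref{theorem:minnorm-y} yields exactly \eqref{BB4}.

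I do not expect any substantial obstacle: the argument is a short computation together with the elementary operator-norm estimate for $\WW^{-1}$. The only point deserving a word of care is the identity $\zz_j^*=\WW^{-1}\yy_j^*$, i.e.\ that the change of variables commutes with the limit $\alpha\to0$; this is immediate here since $X$ is finite dimensional and $\WW^{-1}$ is bounded, so nothing beyond the convergence of Tikhonov minimizers already used for Theorem~\ref{theorem:minnorm-y} is required.
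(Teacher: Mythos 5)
Your proposal is correct and follows essentially the same route as the paper's proof: identify $\zz_j^*=\WW^{-1}\yy_j^*$, write $\pphi_j-\zz_j^*=\|\PP\pphi_j\|_X\,\WW^{-1}\bigl(\pphi_j-\yy_j^*/\|\PP\pphi_j\|_X\bigr)$, bound $\WW^{-1}$ by its operator norm $1/\min_i\|\PP\pphi_i\|_X$, and invoke Theorem \ref{theorem:minnorm-y}. The only difference is that you spell out two details the paper takes for granted, namely the diagonal computation of $\|\WW^{-1}\|$ in the orthonormal basis and the commutation of the limit $\alpha\rightarrow 0$ with $\WW^{-1}$.
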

\begin{proof}
Let $\yy_j^*$ be the minimum norm least-squares solution of \eqref{BB1}. 
  From the definition \eqref{A4} of $\WW$ and the fact that $\zz_j^* = \WW^{-1} \yy_j^*$ it follows that 
  \[
  \pphi_j -  \zz_j^* = \| \PP \pphi_j \|_X \, \WW^{-1} \pphi_j -  \WW^{-1} \yy_j^*.
  \]
  Therefore, 
  \begin{align*}
  \| \pphi_j -  \zz_j^* \|_X &= \| \PP \pphi_j \|_X \, \left\| \WW^{-1} \pphi_j -  \frac{\WW^{-1} \yy_j^*}{\| \PP \pphi_j \|_X} \right\|_X \\
  &\leq \| \PP \pphi_j \|_X \| \WW^{-1} \| \, \left\| \pphi_j - \frac{\yy_j^*}{\| \PP \pphi_j \|_X} \right\|_X \\
  &= \frac{\| \PP \pphi_j \|_X}{\min_{i=1,2,\ldots,n}  \| \PP \pphi_i \|_X} \left\| \pphi_j -  \frac{\yy_j^*}{\| \PP \pphi_j \|_X} \right\|_X, 
  \end{align*}
  and the results follows from Theorem \ref{theorem:minnorm-y}. 
\end{proof}
Inequality \eqref{BB4} shows that, if $\| \PP \pphi_j \|_X \approx \min_{i=1,2,\ldots,n} \| \PP \pphi_i \|_X$, then Method III can potentially yield results which, in norm sense, is better than Method I. Since $\PP$ is the orthogonal projection onto the orthogonal complement $\nullspace{\KK}^{\perp}$ of the nullspace of $\KK$, this will typically be the case for indexes $j$ corresponding to the basis functions closest to the nullspace of $\KK$. We thus expect Method III to work best for recovering the basis functions closest to the nullspace.  
%the smaller $\| \PP \pphi_j \|_X$ is, the closer is $\pphi_j$ to the nullspace. 
(However, this 'effect' does not depend on $\min_{i=1,2,\ldots,n} \| \PP \pphi_i \|_X$ being small, only that $\| \PP \pphi_j \|_X \approx \min_{i=1,2,\ldots,n} \| \PP \pphi_i \|_X$.)  

\section{Numerical experiments}
\label{section:numerical_experiments}
We discretized the control $f$ in terms of a rectangular grid with uniformly sized cells $\Omega_1, \, \Omega_2, \ldots, \Omega_n$, and the scaled characteristic functions of these cells were used as basis functions: 
\[
\pphi_i = \frac{1}{\| \mathcal{X}_{\Omega_i}  \|_{L^2(\Omega)}} \mathcal{X}_{\Omega_i}, \quad i = 1, 2, \ldots, n.
\]
Note that this is an $L^2$-orthonormal basis, cf. our theoretical findings in the previous section. 
The state $u$ was discretized with standard first order Lagrange elements, and the stiffness and mass matrices were generated with the FEniCS software system. We imported these matrices into MATLAB and solved the associated optimality systems.

%We will now illustrate how the regularization operator \eqref{A4} handles different cases. 
To be in 'exact alignment' with our theoretical findings, except for the use of finite precision arithmetic, we committed the so-called inverse crime in Example 1: The same ($8 \times 8$) grid for the control was used for both solving the inverse problem  and for generating the synthetic Dirichlet observation data $d \in L^2(\partial \Omega)$. Also, the true source equaled one of the basis functions, and thus all the assumptions needed in our theorems were fulfilled. 

In examples 2-7 we avoided inverse crimes by using different grid resolutions for the forward and inverse problems. More specifically, except for Example 2, the synthetic Dirichlet boundary data $d$ in \eqref{eq1}-\eqref{eq2} was generated by solving, on a mesh with $65 \times 65$ grid points, the boundary value problem \eqref{eq2} with $f=f_{\mathrm{true}}$. The data $d$ was then mapped onto a coarser grid with $33 \times 33$ grid points. The inverse problem \eqref{eq1}-\eqref{eq2} was thereafter solved, using $16 \times 16$ and $33 \times 33$ uniform meshes for the control $f$ and the state $u$, respectively. With this procedure, the true sources, employed in the forward simulations, consisted of sums of basis functions with neighbouring supports. (Hence, our analysis, in a strict mathematical sense, can not predict the outcome of these simulations.)  

In Example 2, the forward problem was solved on a non-uniform L-shaped mesh with 548 nodes, before the synthetic boundary data $d$ was mapped onto a coarser grid with 137 nodes. We then solved the inverse problem using the coarse grid for both the control $f$ and the state $u$. 

If not stated otherwise, $\epsilon=10^{-3}$, see \eqref{eq2}, and no noise was added to the synthetic data $d$. %When $\epsilon \approx 0$, the PDE in \eqref{eq2} approximately becomes Poisson's equation. 

We observed in panel (b) of Figure \ref{fig:square_ST} that the inverse solution computed with standard Tikhonov regularization fails to recover the true source. Similar results were observed in all the test cases, except when the true source was close to the boundary $\partial \Omega$ of the domain $\Omega$. We will therefore, in most cases, not present further figures generated by applying standard Tikhonov regularization.

\subsection*{Example 1: Simple internal source} 
Figure \ref{fig:square_new} shows the numerical results obtained by solving \eqref{eq1}-\eqref{eq2}, with the regularization operator $\WW$ defined in \eqref{A4}, when the true source is as depicted in panel (a) in Figure \ref{fig:square_ST}. We observe that the location of the true source is recovered rather well by all the three methods, and the results are much better than the inverse solution generated by employing standard Tikhonov regularization, see panel (b) in Figure \ref{fig:square_ST}. Nevertheless, the magnitude of the true source is severely underestimated by all the schemes and the well-known smoothing effect due to 'quadratic regularization' is clearly present. 

\begin{figure}[H]
    \centering
    \begin{subfigure}[b]{0.6\linewidth}        %% or \columnwidth
        \centering
        \includegraphics[width=\linewidth]{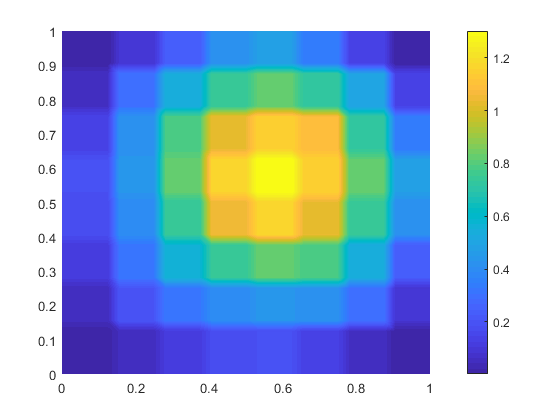}
        \caption{Method I.}
        %\label{fig:1A}
    \end{subfigure}\par
    \begin{subfigure}[b]{0.6\linewidth}        %% or \columnwidth
        \centering
        \includegraphics[width=\linewidth]{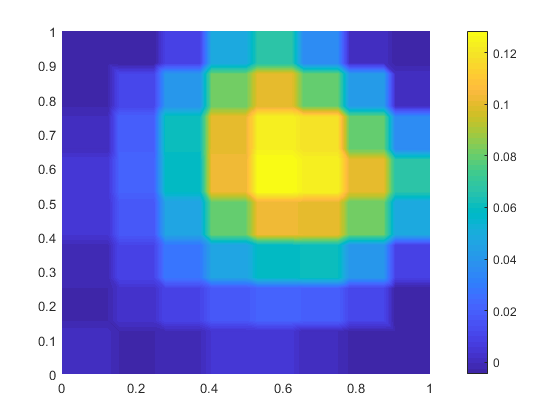}
        \caption{Method II.}
        %\label{fig:1B}
    \end{subfigure}\par
    \begin{subfigure}[b]{0.6\linewidth}        %% or \columnwidth
        \centering
        \includegraphics[width=\linewidth]{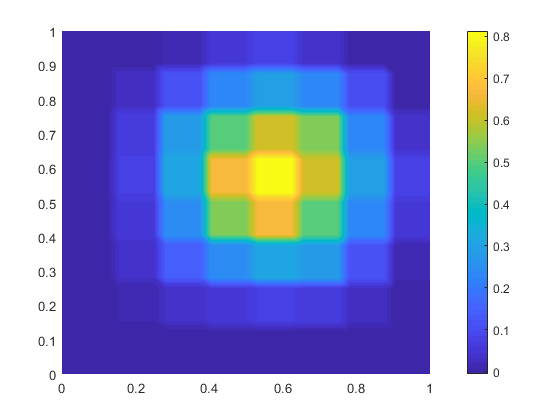}
        \caption{Method III.}
        %\label{fig:1B}
    \end{subfigure}
    \caption{Recovered source, Example 1, with the regularization parameter $\alpha = 10^{-3}$. The true source is depicted in panel (a) in Figure \ref{fig:square_ST}.}
    \label{fig:square_new}
\end{figure}

\subsection*{Example 2: L-shaped geometry}

We will now consider the problem \eqref{eq1}-\eqref{eq2} with an L-shaped domain $\Omega$.  
\begin{figure}[h]
    \centering
    \begin{subfigure}[b]{0.45\linewidth}        %% or \columnwidth
        \centering
        \includegraphics[width=\linewidth]{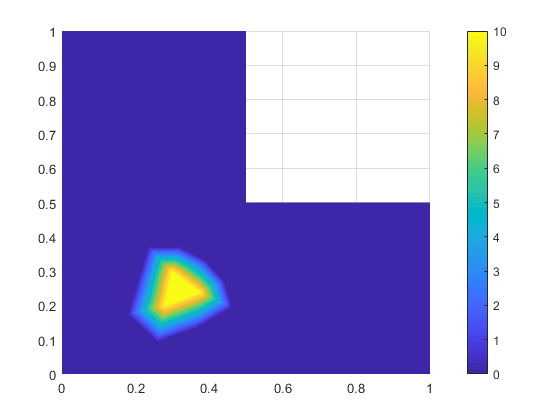}
        \caption{True source}
        %\label{fig:1A}
    \end{subfigure}
    \begin{subfigure}[b]{0.45\linewidth}        %% or \columnwidth
        \centering
        \includegraphics[width=\linewidth]{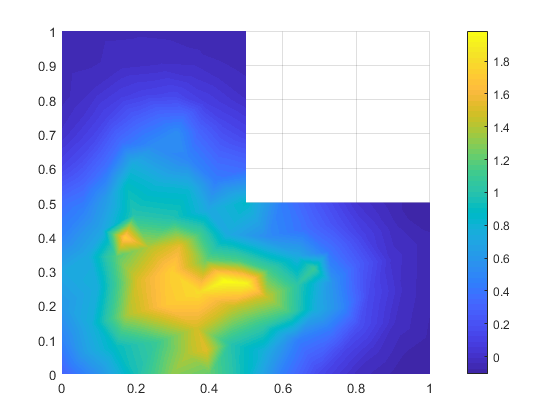}
        \caption{Method I.}
        %\label{fig:1B}
    \end{subfigure}\par
    \begin{subfigure}[b]{0.45\linewidth}        %% or \columnwidth
        \centering
        \includegraphics[width=\linewidth]{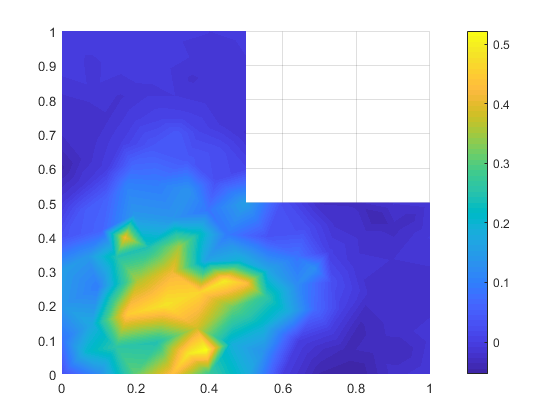}
        \caption{Method II.}
        %\label{fig:1B}
    \end{subfigure}
    \begin{subfigure}[b]{0.45\linewidth}        %% or \columnwidth
    \centering
    \includegraphics[width=\linewidth]{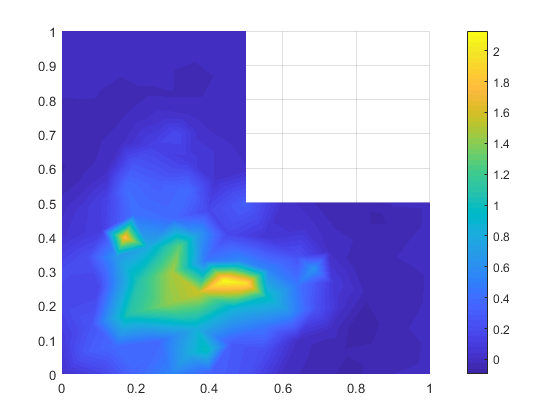}
    \caption{Method III.}
    %\label{fig:1B}
\end{subfigure}
    \caption{L-shaped domain, Example 2. Comparison of the true source and the inverse solutions, using the regularization parameter $\alpha = 10^{-3}$.}
    \label{fig:lshape}
\end{figure}
The location of the true source is identified rather accurately by employing our proposed methods, see panels (b)-(d) in Figure \ref{fig:lshape}. Visually, it appears that Method I produces the best result, whereas particularly Method III generates a solution which is slightly too far to the right. 
%The non-smoothness of the inverse solution is probably caused by the non-uniform grid. The projection operator must be discretized on a coarse grid, where the non-uniformity of the grid might yield a weight matrix $W$ which is not necessary equally accurate in all regions.
In this example, the magnitude of the true source is somewhat better recovered, compared with the results reported in Example 1. 

\subsection*{Example 3: Source at the boundary}
Figure \ref{fig:tikboundary} shows that standard Tikhonov regularization performs somewhat better than the new methods when the  true source is located at the boundary $\partial \Omega$ of the domain $\Omega$, cf. Figure \ref{fig:boundary}. However, all the techniques work rather well in this particular case: The position and the magnitude of the true source is roughly recovered by all the methods.  
\begin{figure}[H]
    \centering
    \begin{subfigure}[b]{0.45\linewidth}        %% or \columnwidth
        \centering
        \includegraphics[width=\linewidth]{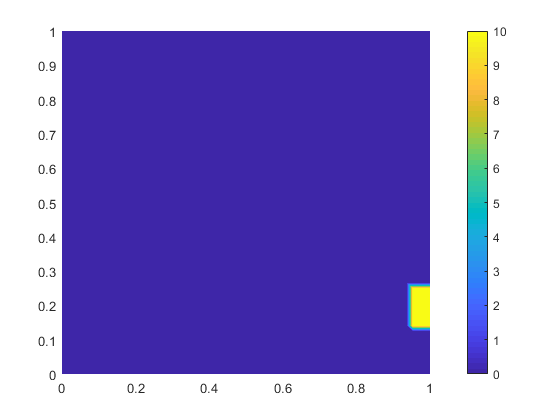}
        \caption{True source}
        %\label{fig:1A}
    \end{subfigure}
    \begin{subfigure}[b]{0.45\linewidth}        %% or \columnwidth
        \centering
        \includegraphics[width=\linewidth]{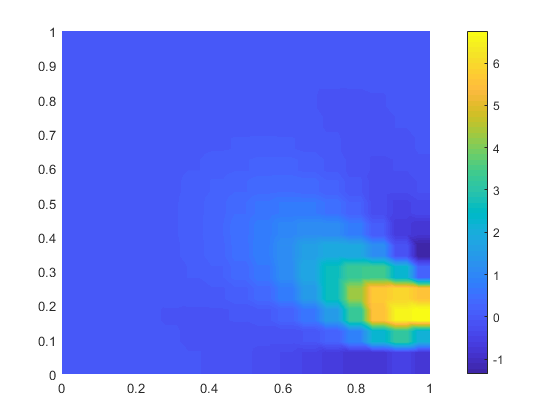}
        \caption{Method I.}
        %\label{fig:1B}
    \end{subfigure}\par
    \begin{subfigure}[b]{0.45\linewidth}        %% or \columnwidth
        \centering
        \includegraphics[width=\linewidth]{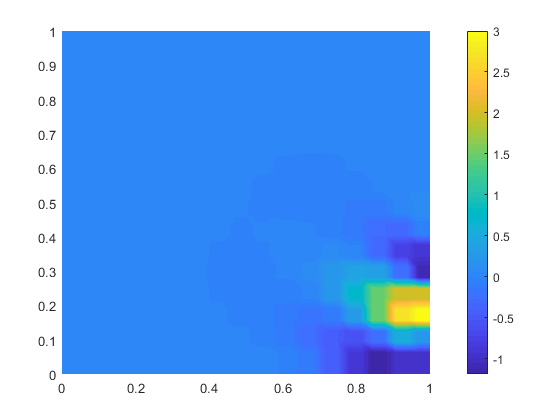}
        \caption{Method II.}
        %\label{fig:1B}
    \end{subfigure}
    \begin{subfigure}[b]{0.45\linewidth}        %% or \columnwidth
    \centering
    \includegraphics[width=\linewidth]{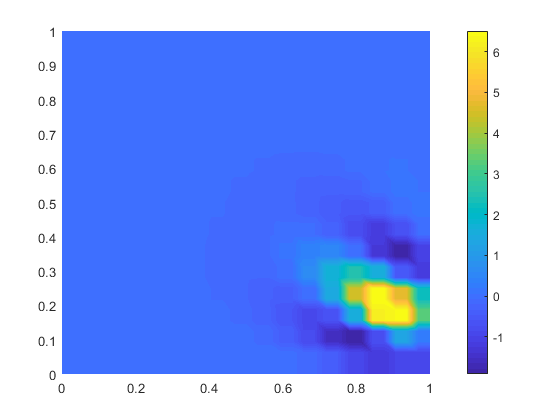}
    \caption{Method III.}
    %\label{fig:1B}
\end{subfigure}
    \caption{Source at the boundary, Example 3. Comparison of the true source and the inverse solutions, using the regularization parameter $\alpha = 10^{-4}$.}
    \label{fig:boundary}
\end{figure}

\begin{figure}[H]
    \centering
    \includegraphics[scale=.5]{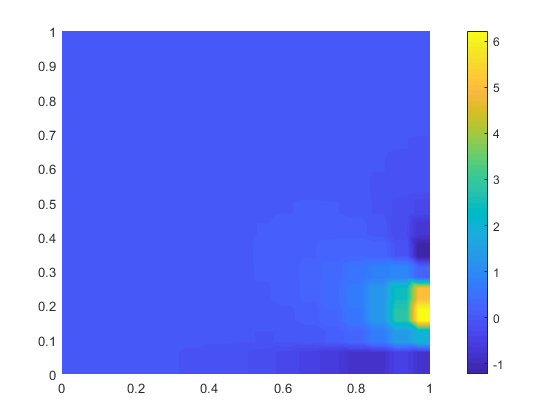}
    \caption{Source at the boundary, Example 3. Inverse solution computed with standard Tikhonov regularization, $\alpha = 10^{-4}$.}
    \label{fig:tikboundary}
\end{figure}

\subsection*{Example 4: Tensor}
The next problem reads:
    \begin{equation} \label{eq:tensor1}
        \min_{(f,u) \in F_h \times H^1(\Omega)} \left\{ \frac{1}{2}\|u-d\|_{L^2(\partial\Omega)}^2 + \frac{1}{2}\alpha\|\WW f\|_{L^2(\Omega)}^2 \right\}
    \end{equation}
    subject to 
    \begin{equation} \label{eq:tensor2}
    \begin{split}
        -\nabla \cdot \sigma \nabla u + \epsilon u &= f \quad \mbox{in } \Omega, \\
        \frac{\partial u}{\partial \nn} &= 0  \quad \mbox{on } \partial \Omega, 
    \end{split}
    \end{equation}
where 
\[
\sigma = \mathrm{diag}(\kappa_1, \, \kappa_2)
\]
is a diagonal and uniformly positive definite $2 \times 2$ matrix (with function entries). In this experiment, the vector field $(\kappa_1, \, \kappa_2)$ is as shown in Figure \ref{fig:tensorfield}.
\begin{figure}[H]
    \centering
    \includegraphics[scale=.7]{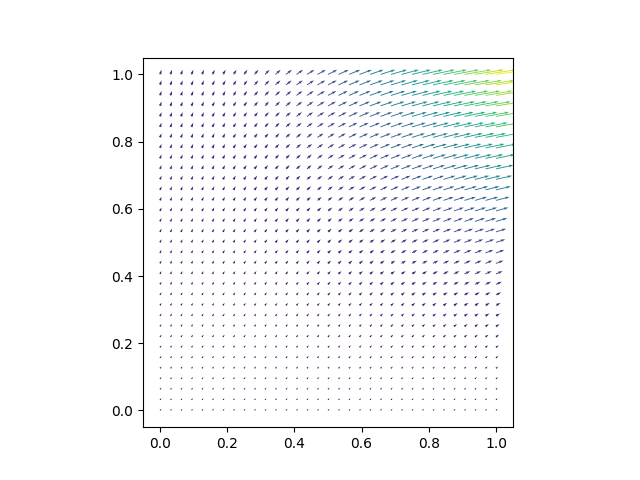}
    \caption{Vector field of $\sigma$.}
    \label{fig:tensorfield}
\end{figure}

Due to the 'overall direction' of the vector field, one could expect that the estimated source would be shifted to the right, and possibly upwards, compared with the true source. 
%if the effect of $\sigma$ is not addressed in the optimization. 
%
However, from the definition \eqref{A4} of the regularization operator $\WW$, it follows that the tensor $\sigma$ will influence $\WW$, i.e., $\WW = \WW(\sigma)$.  This is in contrast to the standard Tikhonov regularization term, which is unaffected by the presence of a non-constant tensor in the PDE.

Figure \ref{fig:tensor} shows the numerical solutions of \eqref{eq:tensor1} - \eqref{eq:tensor2}, as well as the location of the true source. We observe in panels (b)-(d) that the position of the source is identified rather well by all the three methods. Only a marginal drift to the right can be observed. %although somewhat more outspoken for Method II. 

We also applied standard Tikhonov regularization to this problem (figure omitted). The maximum value of the suggested source then occurred at the right part of the boundary, in spite of the fact that the true source is located in the left part of the domain. 
%This illustrates how standard Tikhonov regularization fails to account for the conductivity term in the state equation.
%
%
\begin{figure}[H]
    \centering
    \begin{subfigure}[b]{0.45\linewidth}        %% or \columnwidth
        \centering
        \includegraphics[width=\linewidth]{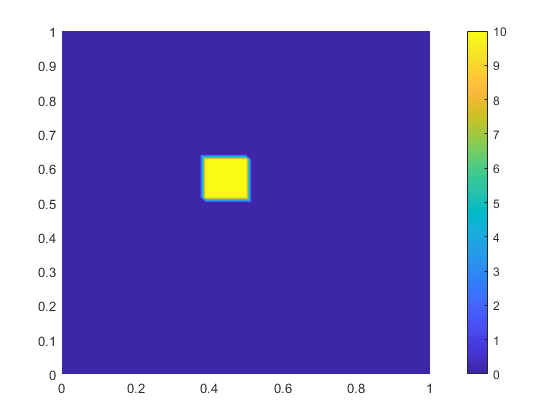}
        \caption{True source}
        %\label{fig:1A}
    \end{subfigure}
    \begin{subfigure}[b]{0.45\linewidth}        %% or \columnwidth
        \centering
        \includegraphics[width=\linewidth]{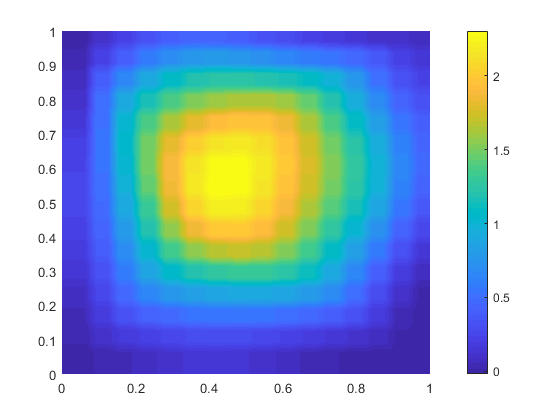}
        \caption{Method I.}
        %\label{fig:1B}
    \end{subfigure}\par
    \begin{subfigure}[b]{0.45\linewidth}        %% or \columnwidth
        \centering
        \includegraphics[width=\linewidth]{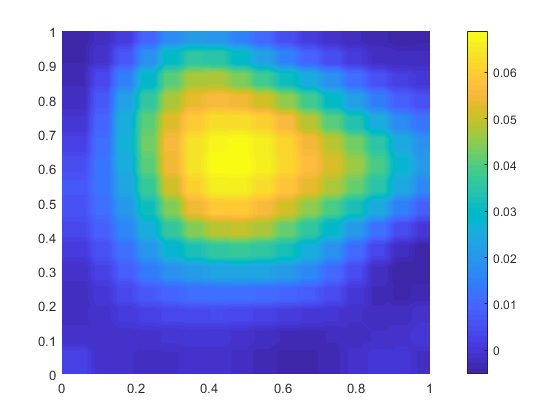}
        \caption{Method II.}
        %\label{fig:1B}
    \end{subfigure}
    \begin{subfigure}[b]{0.45\linewidth}        %% or \columnwidth
    \centering
    \includegraphics[width=\linewidth]{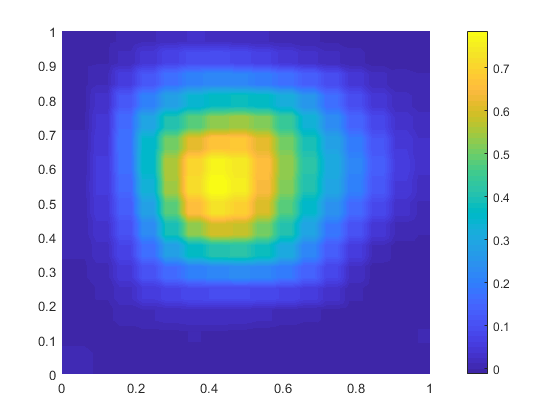}
    \caption{Method III.}
    %\label{fig:1B}
\end{subfigure}
    \caption{State equation with a tensor, Example 4. Comparison of the true source and the inverse solutions, using the regularization parameter $\alpha = 10^{-4}$.}
    \label{fig:tensor}
\end{figure}

\subsection*{Example 5: Multiple sources}
In this subsection we investigate how the new techniques handle multiple sources. Figure \ref{fig:twosources} shows the two-sources case, whereas the results for the three-sources case are displayed in Figure \ref{fig:threesources}.

For the two-sources case, methods II and III rather successfully localize the two regions, see panels (c) and (d) in Figure \ref{fig:twosources}. The two regions are quite clearly distinguishable, even though the inverse solution is much smoother than the true source. In this case, Method I fails to recover the sources, ref. panel (b).

Similarly, for the three-sources case, the 'active' regions are still distinguishable using methods II or III, although two of the three sources are much stronger recovered than the third (bottom right).

\begin{figure}[h]
    \centering
    \begin{subfigure}[b]{0.45\linewidth}        %% or \columnwidth
        \centering
        \includegraphics[width=\linewidth]{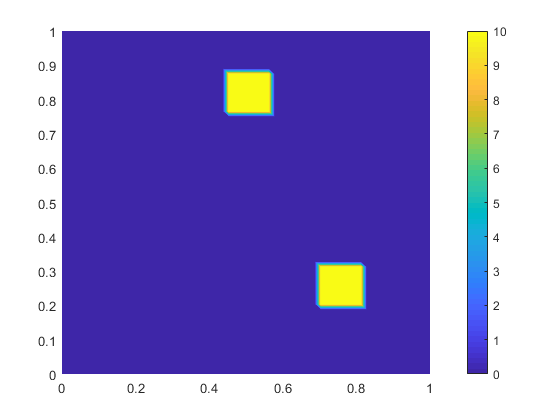}
        \caption{True source}
        %\label{fig:1A}
    \end{subfigure}
    \begin{subfigure}[b]{0.45\linewidth}        %% or \columnwidth
        \centering
        \includegraphics[width=\linewidth]{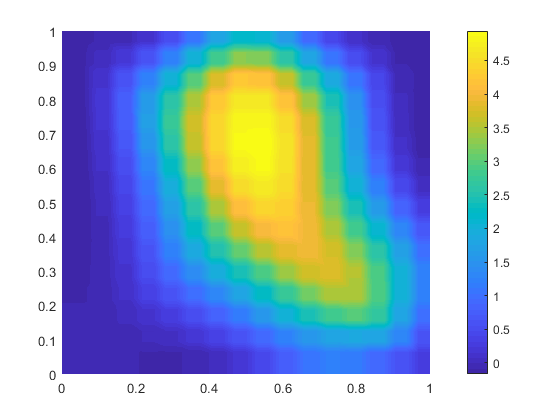}
        \caption{Method I.}
        %\label{fig:1B}
    \end{subfigure}\par
    \begin{subfigure}[b]{0.45\linewidth}        %% or \columnwidth
        \centering
        \includegraphics[width=\linewidth]{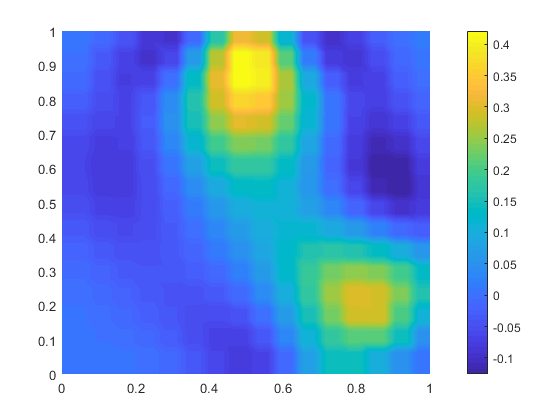}
        \caption{Method II.}
        %\label{fig:1B}
    \end{subfigure}
    \begin{subfigure}[b]{0.45\linewidth}        %% or \columnwidth
    \centering
    \includegraphics[width=\linewidth]{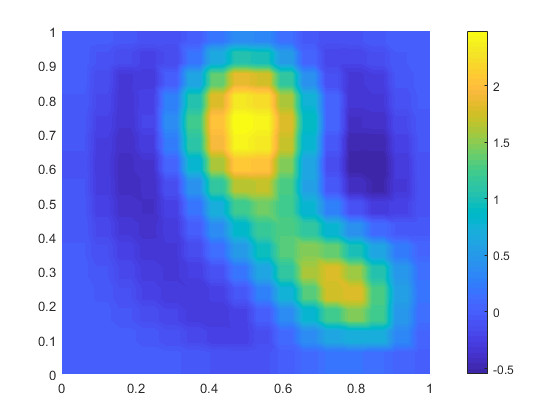}
    \caption{Method III.}
    %\label{fig:1B}
\end{subfigure}
    \caption{Two disjoint sources, Example 5. Comparison of the true sources and the inverse solutions, using the regularization parameter $\alpha = 10^{-3}$.}
    \label{fig:twosources}
\end{figure}
\begin{figure}[h]
    \centering
    \begin{subfigure}[b]{0.45\linewidth}        %% or \columnwidth
        \centering
        \includegraphics[width=\linewidth]{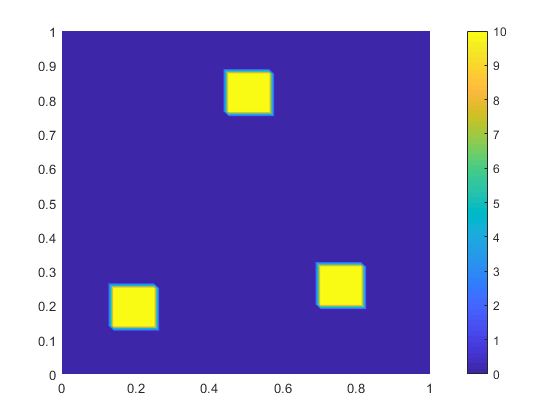}
        \caption{True source}
        %\label{fig:1A}
    \end{subfigure}
    \begin{subfigure}[b]{0.45\linewidth}        %% or \columnwidth
        \centering
        \includegraphics[width=\linewidth]{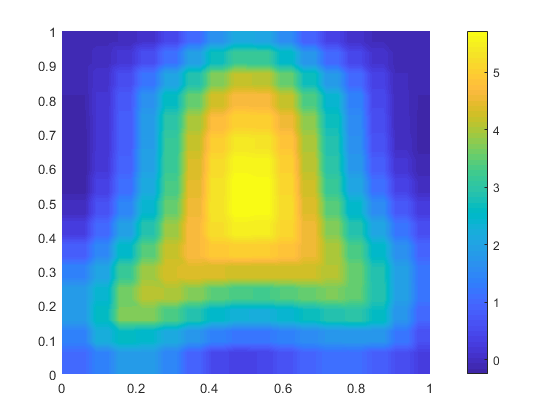}
        \caption{Method I.}
        %\label{fig:1B}
    \end{subfigure}\par
    \begin{subfigure}[b]{0.45\linewidth}        %% or \columnwidth
        \centering
        \includegraphics[width=\linewidth]{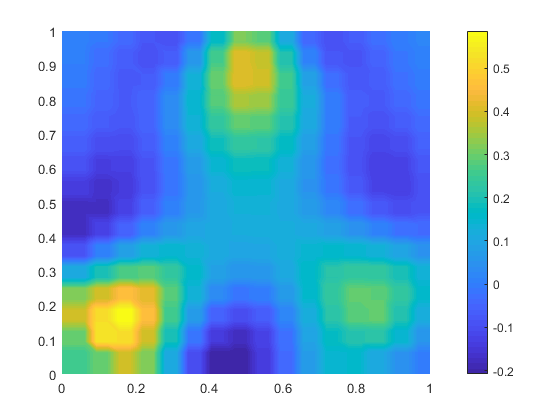}
        \caption{Method II.}
        %\label{fig:1B}
    \end{subfigure}
    \begin{subfigure}[b]{0.45\linewidth}        %% or \columnwidth
    \centering
    \includegraphics[width=\linewidth]{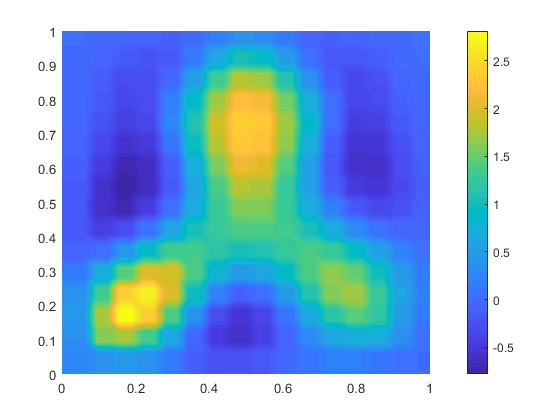}
    \caption{Method III.}
    %\label{fig:1B}
\end{subfigure}
    \caption{Three disjoint sources, Example 5. Comparison of the true sources and the inverse solutions, using the regularization parameter $\alpha = 10^{-3}$.}
    \label{fig:threesources}
\end{figure}

\subsection*{Example 6: Noisy data} 
Next, we explore the performance of the new methodology for noisy observation data $d$.
More specifically, we now assume that only an approximation $d^\delta$ of $d$ is known: 
\begin{equation}
    d^\delta(x) = d(x) + \delta\rho(x),
\end{equation}
where $\rho(x)$ is a normally distributed stochastic variable with zero mean and standard deviation equal to 1. The scalar $\delta$ is  
\begin{equation*}
    \delta = \kappa \left(\max_{x\in\partial\Omega} d(x) - \min_{x\in\partial\Omega} d(x)\right),
\end{equation*}
and we define the noise level to be the standard deviation of $\delta \rho$ relatively to the range of the data $d$, i.e., 
\[
\frac{\sigma(\delta \rho)}{\max_{x\in\partial\Omega} d(x) - \min_{x\in\partial\Omega} d(x)} = \frac{\delta}{\max_{x\in\partial\Omega} d(x) - \min_{x\in\partial\Omega} d(x)} = \kappa.
\]

The regularization parameter was chosen according to  Mozorov's discrepancy principle, i.e., when
\begin{equation*}
    \|d^\delta - d\|_{L^2(\partial\Omega)} = \gamma,
\end{equation*}
we chose $\alpha$ such that 
\begin{equation*}
    \|\KK f_\alpha^\delta - d^\delta\|_{L^2(\partial\Omega)} = \gamma.
\end{equation*}

\begin{figure}[h]
%    \centering
%    \begin{subfigure}[b]{0.45\linewidth}        %% or \columnwidth
%        \centering
%        \includegraphics[width=\linewidth]{project/}
%        \caption{Method I w/ 5\% noise.}
%        %\label{fig:1A}
%    \end{subfigure}
%    \begin{subfigure}[b]{0.45\linewidth}        %% or \columnwidth
%        \centering
%        \includegraphics[width=\linewidth]{project/}
%        \caption{Method I w/ 20\% noise.}
%        %\label{fig:1B}
%    \end{subfigure}\par
    \centering
    \begin{subfigure}[b]{0.45\linewidth}        %% or \columnwidth
        \centering
        \includegraphics[width=\linewidth]{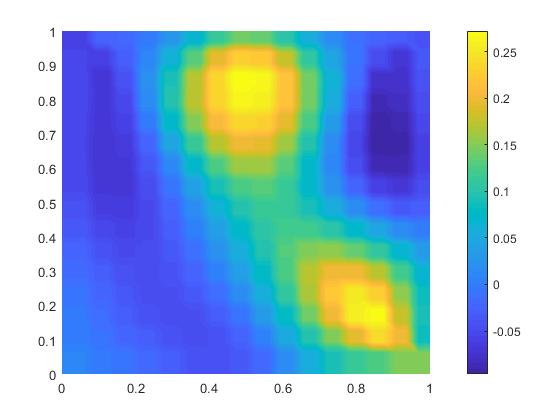}
        \caption{Method II, 5\% noise.}
        %\label{fig:1B}
    \end{subfigure}
    \begin{subfigure}[b]{0.45\linewidth}        %% or \columnwidth
    \centering
    \includegraphics[width=\linewidth]{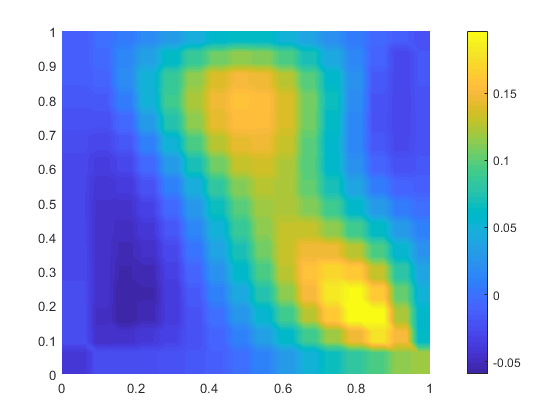}
    \caption{Method II, 20\% noise.}
    %\label{fig:1B}
    \end{subfigure}\par
    \begin{subfigure}[b]{0.45\linewidth}        %% or \columnwidth
        \centering
        \includegraphics[width=\linewidth]{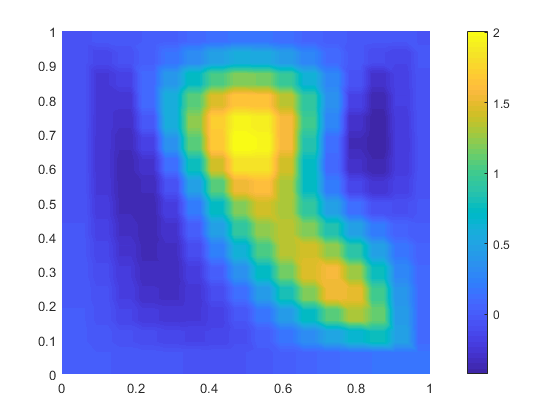}
        \caption{Method III, 5\% noise.}
        %\label{fig:1A}
    \end{subfigure}
    \begin{subfigure}[b]{0.45\linewidth}        %% or \columnwidth
        \centering
        \includegraphics[width=\linewidth]{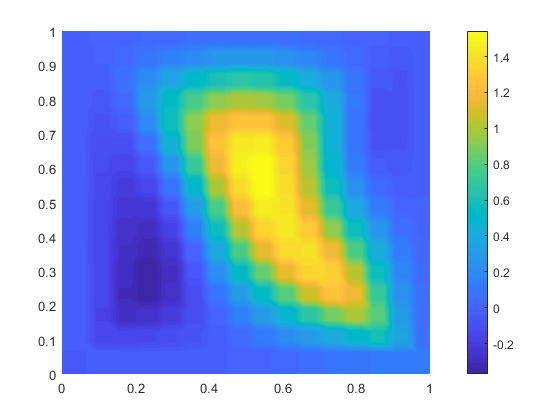}
        \caption{Method III, 20\% noise.}
        %\label{fig:1B}
    \end{subfigure}    
    \caption{Example 6, $5 \%$ and $20\%$ noise. The true source is shown in panel (a) in Figure \ref{fig:twosources}.}
    \label{fig:noisy}
\end{figure}

We used the same true source as in Example 5, see panel (a) in Figure \ref{fig:twosources}. 
Figure \ref{fig:noisy} shows plots of the numerical simulations with Method II and Method III when the noise level is $5 \%$ and $20 \%$. Both methods produce solutions which indicate the regions of the true sources rather well when the noise level is $5 \%$. In the case of $20 \%$ noise, only Method II is able to distinguish the two regions. 
%and not as clearly as with $5 \%$ noise. 
Since Method I failed also for the noise-free case, we do not present the simulations with noisy observation data for this method. 

\subsection*{\textcolor{black}{Example 7: Inhomogeneous Helmholtz equation}}

\textcolor{black}{Finally, we consider two examples where $\epsilon < 0$, i.e., the PDE in \eqref{eq2} becomes the inhomogeneous Helmholtz equation. As mentioned in section \ref{section:motivation}, maximum principles for functions satisfying the Helmholtz equation is not readily available. Consequently, we can not use the argument presented in section \ref{section:motivation} to assert whether Tikhonov regularization will fail to identify internal sources.}

\textcolor{black}{The results are displayed in Figure \ref{fig:helmholtz1} and Figure \ref{fig:helmholtz2} for $\epsilon = -1$ and $\epsilon = -100$, respectively. The true source is as in Example 1, see Figure \ref{fig:square_ST_a}. For both choices of $\epsilon$, we observe that methods I, II and III are able to locate the position of the true source, see panels (b)-(d), whereas employing standard Tikhonov regularization is unsuccessful for $\epsilon = -1$, but works well when $\epsilon = -100$, cf. panel (a) in Figure \ref{fig:helmholtz2}. It is not clear to us why the use of standard Tikhonov regularization is sufficient to locate the interior source when $\epsilon = -100$.}

% \textcolor{red}{Finally, we end the numerical section with two examples where $\epsilon < 0$. As mentioned in Section 2, the maximum principle does not hold for Helmholtz equation (i.e. with $\epsilon < 0$), and consequently we can not assert whether Tikhonov regularization will fail to identify internal sources.}

% \textcolor{red}{The results are displayed in Figure \ref{fig:helmholtz1} and Figure \ref{fig:helmholtz2}, for $\epsilon = -1$ and $\epsilon = -100$, respectively. The true source is given as in Example 1, see Figure \ref{fig:square_ST_a}. For both choices of $\epsilon$, we observe that our three new methods are able to locate the position of the true source, see panels (b)-(d), whereas employing standard Tikhonov regularization is unsuccessful for $\epsilon = -1$, but works well when $\epsilon = -100$, cf. panel (a). It is not clear to us why the use of standard Tikhonov regularization is sufficient to locate the interior source in this particular case.}

\begin{figure}[h]
    \centering
    \begin{subfigure}[b]{0.45\linewidth}        %% or \columnwidth
        \centering
        \includegraphics[width=\linewidth]{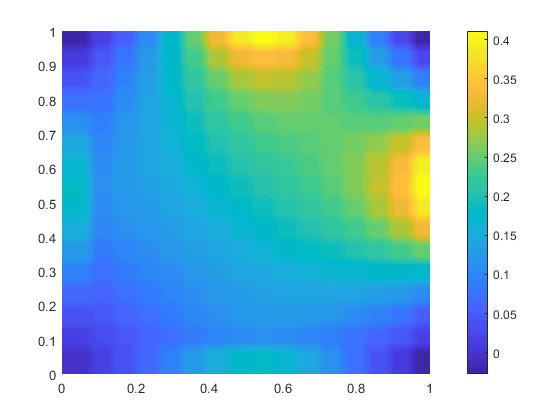}
        \caption{Standard Tikhonov regularization}
        %\label{fig:1A}
    \end{subfigure}
    \begin{subfigure}[b]{0.45\linewidth}        %% or \columnwidth
        \centering
        \includegraphics[width=\linewidth]{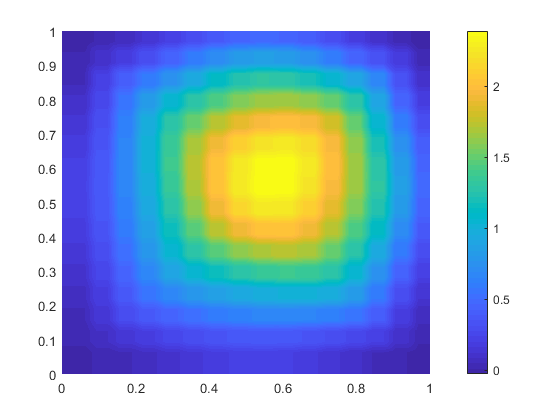}
        \caption{Method I.}
        %\label{fig:1B}
    \end{subfigure}\par
    \begin{subfigure}[b]{0.45\linewidth}        %% or \columnwidth
        \centering
        \includegraphics[width=\linewidth]{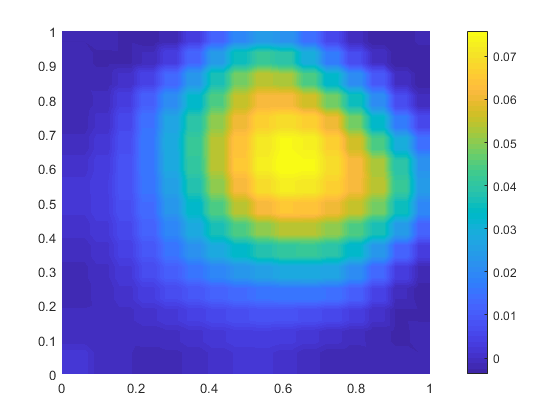}
        \caption{Method II.}
        %\label{fig:1B}
    \end{subfigure}
    \begin{subfigure}[b]{0.45\linewidth}        %% or \columnwidth
    \centering
    \includegraphics[width=\linewidth]{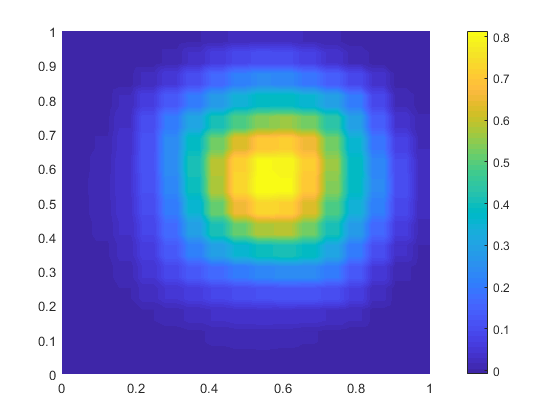}
    \caption{Method III.}
    %\label{fig:1B}
\end{subfigure}
    \caption{Inhomogeneous Helmholtz equation with $\epsilon = -1$. Comparison of the inverse solutions, using the regularization parameter $\alpha = 10^{-3}$. The true source is displayed in Figure \ref{fig:square_ST_a}.}
    \label{fig:helmholtz1}
\end{figure}

\begin{figure}[h]
    \centering
    \begin{subfigure}[b]{0.45\linewidth}        %% or \columnwidth
        \centering
        \includegraphics[width=\linewidth]{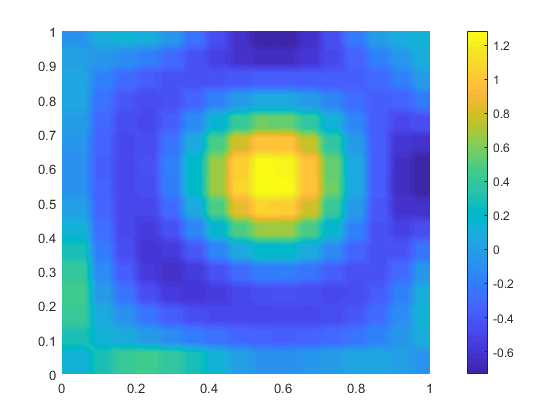}
        \caption{Standard Tikhonov regularization}
        %\label{fig:1A}
    \end{subfigure}
    \begin{subfigure}[b]{0.45\linewidth}        %% or \columnwidth
        \centering
        \includegraphics[width=\linewidth]{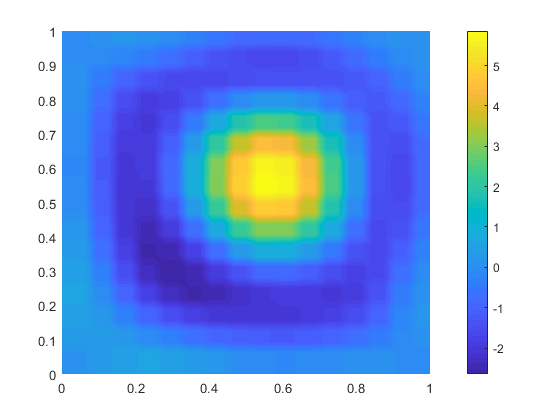}
        \caption{Method I.}
        %\label{fig:1B}
    \end{subfigure}\par
    \begin{subfigure}[b]{0.45\linewidth}        %% or \columnwidth
        \centering
        \includegraphics[width=\linewidth]{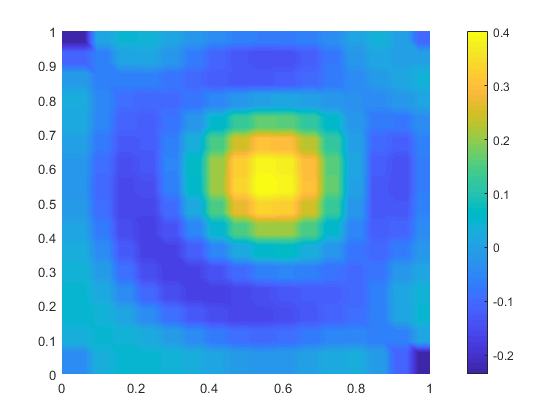}
        \caption{Method II.}
        %\label{fig:1B}
    \end{subfigure}
    \begin{subfigure}[b]{0.45\linewidth}        %% or \columnwidth
    \centering
    \includegraphics[width=\linewidth]{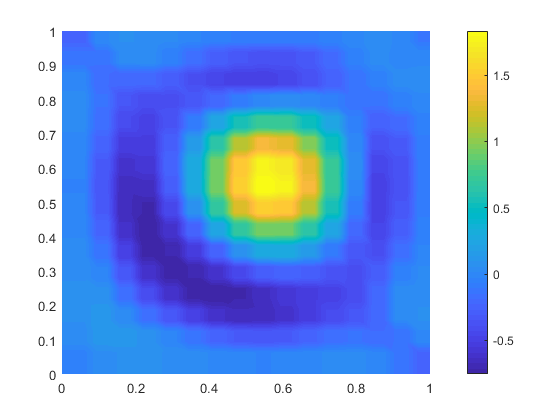}
    \caption{Method III.}
    %\label{fig:1B}
\end{subfigure}
    \caption{Inhomogeneous Helmholtz equation with $\epsilon = -100$. Comparison of the inverse solutions, using the regularization parameter $\alpha = 10^{-3}$. The true source is displayed in Figure \ref{fig:square_ST_a}.}
    \label{fig:helmholtz2}
\end{figure}

\section{\textcolor{black}{Remarks and an open problem}}
\label{section:discussion}
% We have studied the problem of identify the source in a prototypical elliptic PDE from Cauchy boundary data. This is an ill-posed problem and the associated forward operator has a significant nullspace.  Our analysis shows that standard Tikhonov regularization always will suggest that the unknown source is positioned at the boundary of the domain $\Omega$ of the elliptic PDE, regardless of the location of the true source. In contrast, 
The methods developed in this paper can recover a single %and well-localized 
source positioned anywhere in $\Omega$, and the new schemes are defined in terms of standard quadratic optimization problems. They are thus simple to implement. Our results are supported by rigorous mathematical analysis and by numerical experiments. 

\textcolor{black}{Both our theoretical investigations and experiments show that if the true sources are very close to the boundary $\partial \Omega$, then standard Tikhonov regularization is preferable. On the other hand, methods I, II or III should be applied if one wants to identify internal sources. Moreover, we know that, for a particular simple class of problems, a scaled version of Method II yields better approximations than Method I, see Theorem \ref{theorem:minnorm-y}. Corollary \ref{corollary:minnorm-z} expresses that a similar result, though somewhat weaker, also holds for Method III.}

The examples, presented in the previous section, indicate that \textcolor{black}{methods II and III} also can identify several isolated sources. Nevertheless, we have not presented any mathematical analysis for such cases. A more thorough understanding of this is an open problem. 

\appendix

\section{Alternative motivation for $\WW$}
\label{alternative-motivation}
Consider a regularization operator in the form 
\[
\WW \phi_i = w_i \pphi_i, \, w_i \in \mathbb{R} \setminus \{0\}, \, i=1,2,\ldots,n.
\]
The minimum norm least-squares solution $\yy^*$ of \eqref{A3} belongs to $\nullspace{\KK \WW^{-1}}^{\perp}$: 
\[
(\yy^*, \WW q)_X = 0 \quad \mbox{for all } q \in \nullspace{\KK} 
\]
because  
\[
q \in \nullspace{\KK} \iff \WW q \in \nullspace{\KK \WW^{-1}}. 
\]
Since $\WW$ is self-adjoint, it follows that 
\[
(\WW \yy^*, q)_X = 0 \quad \mbox{for all } q \in \nullspace{\KK},  
\]
and we conclude that 
\begin{equation}
\label{app1}
\WW \yy^* \in \nullspace{\KK}^{\perp}. 
\end{equation}

Assume that one wants to use the minimum norm least-squares solution $\yy^*$ of \eqref{A3} to approximately recover $\pphi_i$. That is, we want to choose $\WW$ such that the minimum norm least-squares solution $\yy^* \approx \pphi_i$ is possible. Choosing $\yy^* \approx \pphi_i$ in  \eqref{app1} yields that $\WW \pphi_i = w_i \pphi_i$ approximately must belong to $\nullspace{\KK}^{\perp}$. We propose to achieve this by requiring that $w_i \pphi_i$ is as close as possible to $\frac{\PP \pphi_i}{\| \PP \pphi_i \|_X} \in \nullspace{\KK}^{\perp}$, where $\PP$ is the orthogonal projection \eqref{A3.1}. This suggests the choice 
\[
w_i = \argmin_{c} \left\| c \pphi_i - \frac{\PP \pphi_i}{\| \PP \pphi_i \|_X}   \right\|_X^2 = \| \PP \pphi_i \|_X.  
\]
 In other words, we choose $w_i$ such that $w_i \pphi_i$ gets as close as possible to the normalized best approximation of $\pphi_i$ in $\nullspace{\KK}^{\perp}$. 

% Hence, if one wants to use the minimum norm least-squares solution of \eqref{A3} to identify sources with a significant '$\pphi_i$-content', one may choose $\WW$ such that $\WW \pphi_i = w_i \pphi_i$ is as close as possible to $\frac{\PP \pphi_i}{\| \PP \pphi_i \|_X} \in \nullspace{\KK}^{\perp}$, where $\PP$ is the orthogonal projection \eqref{A3.1}. This suggests the choice 
% \[
% w_i = \argmin_{c} \left\| c \pphi_i - \frac{\PP \pphi_i}{\| \PP \pphi_i \|_X}   \right\|_X^2 = \| \PP \pphi_i \|_X.  
% \]
%  In other words, we choose $w_i$ such that $w_i \pphi_i$ gets as close as possible to the normalized best approximation of $\pphi_i$ in $\nullspace{\KK}^{\perp}$. 

\bibliographystyle{plain}
\bibliography{references}

\end{document}